\definecolor{DarkBlue}{rgb}{0,0,0.8} 
\definecolor{DarkGreen}{rgb}{0,0.5,0.0} 
\definecolor{DarkRed}{rgb}{0.9,0.0,0.0} 
\newtheorem*{thm*}{Theorem}
\numberwithin{equation}{section}
\newtheorem{thm}[equation]{Theorem}
\newtheorem{lem}[equation]{Lemma}
\newtheorem{cor}[equation]{Corollary}
\newtheorem{prop}[equation]{Proposition}
\theoremstyle{definition}
\newcommand{\on}{\operatorname}
\newcommand{\h}{\hat}
\newcommand{\fd}{g}
\newcommand{\fr}{f}
\newcommand{\logical}{\mathbb{U}}
\newcommand{\flip}{\on{flip}}
\newcommand{\negation}[1]{\neg #1}
\newcommand{\agenda}{\overline{X}}
\newcommand{\halfagenda}{X}
\newcommand{\closure}{\on{Closure}}
\newcommand{\suchthat}{\;\ifnum\currentgrouptype=16 \middle\fi|\;}
\newcommand{\arc}[1]{\ar @{-} #1 |-(0.57){\SelectTips{cm}{}\object@{>}}}
\newcommand{\arccurv}[2]{\ar @{-} #2 #1 |-(0.57){\SelectTips{cm}{}\object@{>}}}
\newcommand{\arccurvshift}[3]{\ar @{-} #2 #1 |-(#3){\SelectTips{cm}{}\object@{>}}}
\title{Applying Fourier Analysis to Judgment Aggregation}
\author{Yan X Zhang}
\address{Yan X Zhang, San Jose State University, San Jose, CA, USA}
\email{yan.x.zhang@math.sjsu.edu}
\begin{document}

\pagestyle{plain}

\begin{abstract}
The classical Arrow's Theorem answers ``how can $n$ voters obtain a collective preference on a set of outcomes, if they have to obey certain constraints?'' We give an analogue in the judgment aggregation framework of List and Pettit, answering ``how can $n$ judges obtain a collective judgment on a set of logical propositions, if they have to obey certain constraints?'' We abstract this notion with the concept of \emph{normal pairs} of functions on the Hamming cube, which we analyze with Fourier analysis and elementary combinatorics. We obtain judgment aggregation results in the special case of \emph{symbol-complete} agendas and compare them with existing theorems in the literature. Amusingly, the non-dictatorial classes of functions that arise are precisely the classical logical functions OR, AND, and XOR. 
\end{abstract}

\maketitle

\section{Introduction}
\label{sec:intro}

In \cite{list2002aggregating}, List and Pettit explore the ``doctrinal paradox'' put forth in Kornhauser and Sager \cite{kornhauser1993one} and prove an impossibility theorem. In their followup work \cite{list2004aggregating}, they contrast their theorem with the celebrated Arrow's Theorem from the social choice literature. List-Pettit's Theorem is about the difficulty of aggregating $n$ judges' collective judgments on an agenda of logical propositions into a single judgment, and Arrow's Theorem is about the difficulty of aggregating $n$ voters' preference orderings into a single preference ordering. The ``judgment aggregation'' framework that comes out of List and Pettit's work is very powerful, because logical propositions are very expressive and flexible. 

\cite{list2004aggregating} concludes that while List-Pettit's theorem is superficially similar to Arrow's Theorem, the two theorems are quite different at heart. What, then, \textbf{is} the right analogue to Arrow's Theorem in the judgment aggregation framework? Arrow's Theorem is typically phrased in the following way: ``if $n$ voters wish to obtain a collective preference on a set of outcomes, then it is impossible for the aggregation function to be simultaneously Pareto, Indifferent to Irrelevant Alternatives, and Non-Dictatorial.'' In the judgment aggregation literature, Dietrich and List continue this trend in \cite{list2007} and \cite{list2013}, proving impossibility theorems that cover a large space of situations, including explicitly making an analogy with Arrow's Theorem in \cite{list2007}. 

As \emph{one persons' modus ponens is another's modus tollens}, we explore a different phrasing of Arrow's Theorem, which is: ``if $n$ voters wish to obtain a collective preference on a set of outcomes, and the aggregation function must be Pareto and Indifferent to Irrelevant Alternatives, then it must be a dictatorship function.'' That is, we think of Arrow's Theorem as a \textbf{classification} theorem rather than an impossibility theorem.

In this direction, we define an aggregation function of $n$ judgments of $\agenda$ to be \emph{Arrovian} if it satisfies Arrow's Theorem-like constraints. We classify possible Arrovian aggregation functions in the case when our agenda $\agenda$ of logical propositions is \emph{symbol-complete} and \emph{symbol-connected}, a condition special enough to give us a lot of structure but general enough to include many realistic applications.  We discover a friendly set of functions: the OR, AND, and XOR functions from the first chapter of any undergraduate book on logic or computer science. Here is a simplified paraphrasing of our main theorem; the details and definitions are made explicit in the paper:

\begin{thm*}[Corollary~\ref{cor:main}, paraphrased]
Suppose the agenda $\agenda$ is symbol-complete and symbol-connected, containing the compound proposition $\fd$. If $f$ aggregates the judgments of $n$ judges on $\agenda$ in an Arrovian manner, then either:
\begin{itemize}
\item $f$ is a ``dictatorship'' (that is, $f$ returns the judgment of a single judge), or
\item $\fd$ must be the OR, AND, or XOR function (or negation), and $f$ is an ``oligarchy'' on a distinguished subset $S$ of judges, obtained by performing the \textbf{same} function type (or possibly its negation) as $\fd$ on $S$.
\end{itemize}
\end{thm*}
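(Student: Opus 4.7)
The plan is to exploit the framework of normal pairs of Boolean functions on the Hamming cube that the abstract promises, and to reduce the Arrovian aggregation condition to a local classification problem for such pairs. Specifically, for every proposition $p$ in the agenda $\agenda$ I would consider the induced aggregation function $f_p \colon \{0,1\}^n \to \{0,1\}$ returning the collective truth value of $p$ as a function of the $n$ judges' truth values. The Pareto-like unanimity requirement pins down $f_p$ on the constant inputs, while the IIA-like locality requirement forces that for each compound proposition $\fd = G(p_1,\ldots,p_k)$ in $\agenda$, the value $f_\fd(\vec v)$ is determined by the tuple $(f_{p_1}(\vec v),\ldots,f_{p_k}(\vec v))$. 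This is exactly the structural setup that makes $(f_{\fd}, \fd)$ a normal pair in the sense the paper presumably develops.

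Next I would invoke Fourier analysis on the Hamming cube to classify normal pairs. The unanimity condition controls the constant Fourier coefficient, and the compatibility with $\fd$ propagates constraints to higher-order Fourier coefficients of $f$. I would expect the argument to show that the Fourier support of $f$ must be concentrated in a way mirroring the Fourier support of $\fd$: either $f$ is supported on a single character $\chi_{\{i\}}$ (giving a dictator), or its support matches the degree pattern of one of the three distinguished Boolean functions OR, AND, XOR. The matching of function types in the theorem then emerges because only these three classes (together with their negations) are closed under both the unanimity constraint and the IIA-induced identity $f_\fd = \fd \circ (f_{p_1},\ldots,f_{p_k})$ in a nontrivial way; for OR/AND this is the monotone idempotent structure, and for XOR it is the affine structure over $\FF_2$.

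Once the local classification is obtained for a single compound $\fd$, I would deploy symbol-completeness to guarantee that every atomic symbol appearing in $\fd$ is individually in $\agenda$ (so the relevant $f_{p_i}$ truly exist and are constrained), and symbol-connectedness to glue the local classifications together: any symbol shared between two compound propositions forces their distinguished subsets $S$ to coincide, so a single subset works for the whole agenda. Combined with the type-matching from the Fourier step, this yields the dichotomy in the statement.

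The main obstacle I anticipate is the middle step, namely proving the rigidity of the Fourier spectrum of a normal pair. Handling the unanimity and locality constraints separately is straightforward, but showing that they jointly exclude every Boolean function outside dictators, OR, AND, and XOR on a subset $S$ requires genuine work; in particular one must rule out mixtures of these types and show that when $f$ is not a dictator, the function $\fd$ cannot be of a different family than $f$. I expect this rigidity to follow from a careful look at low-degree Fourier coefficients combined with elementary combinatorial arguments about which characters can coexist in the support of such a pair.
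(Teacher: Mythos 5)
Your high-level architecture matches the paper's (reduce the Arrovian condition to a commutation constraint between the compound proposition $\fd$ and the aggregation function, classify via Fourier analysis on the Hamming cube, glue via symbol-connectedness), but two essential steps are missing, and both are where the real work lives. First, PI by itself only gives you a possibly \emph{different} function $f_{p_i}$ on each proposition, so the object your setup produces is a tuple satisfying $f_{\fd}\bigl(\fd(\mathrm{col}_1),\ldots,\fd(\mathrm{col}_n)\bigr)=\fd\bigl(f_{p_1}(\mathrm{row}_1),\ldots,f_{p_k}(\mathrm{row}_k)\bigr)$, not the paper's normal pair, which requires a \emph{single} function $\fr$ on every row. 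Collapsing to one function is the content of Lemma~\ref{lem:PI-doctrinal-strong} and Corollaries~\ref{cor:strong} and~\ref{cor:nosigns}: one exhibits two consistent judgments differing only on $x$ and on a proposition $y$ determined by $\halfagenda\backslash y$, feeds mixtures of them into $f$, and uses UP plus logical consistency of the output to force $f_y=f_x$ or $f_y=\flip(f_x)$. Nothing in your sketch produces this, and without it neither the single distinguished subset $S$ nor the single function type in the conclusion follows, and the Fourier classification you plan to invoke does not even apply to the object you have constructed.

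Second, the rigidity of normal pairs is precisely the step you defer (``I expect this rigidity to follow from a careful look at low-degree Fourier coefficients''), and it does not come from low-degree information alone. The paper needs (i) a purely combinatorial dichotomy (Proposition~\ref{prop:XOR-or-free}): either both functions are XOR/NXOR or both are \emph{forceful}, proved by a pivotality argument on the $(m+1)\times(n+1)$ evaluation matrix rather than by Fourier analysis; and (ii) an identity equating, for every $U\subset[m]\times[n]$, the coefficient of the monomial $\prod_{(i,j)\in U}M_{ij}$ on the two sides of the commutation relation (Lemma~\ref{lem:powerful-sauce}), followed by a divisibility argument on the resulting equations to pin down AND and OR (Proposition~\ref{prop:AND-OR}). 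Your observation that OR, AND, and XOR are ``closed under'' the constraints explains why they are solutions, not why they are the only ones. The proposal is therefore a reasonable road map pointing in the same direction as the paper, but the two load-bearing arguments are absent.
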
 We now give two examples. 
\begin{itemize}
\item if $n$ judges are judging on the agenda $\{$The defendant lied, the defendant misspoke, the defendant committed the crime, the defendant was guilty and gave false information (i.e. committed the crime and then misspoke or lied)$\}$, logically representable as $\{P, Q, R, (P \vee Q) \wedge R\}$, then the only Arrovian method to aggregate the $n$ judgments is for the judges to select a single ``dictator'' judge to represent the entire group.
\item if $n$ judges are judging on the agenda $\{$The defendant committed unspeakable crime 1, the defendant committed unspeakable crime 2, the defendant committed unspeakable crime 3, the defendant committed at least one unspeakable crime$\}$, logically representable as $\{P, Q, R, P \vee Q \vee R\}$, then the Arrovian methods of aggregating the $n$ judgments are for a subset $S$ of the judges to decide that each proposition is judged true in the aggregate if and only if at least one of $S$ think the proposition is true (in other words, an OR function over the judges). Furthermore, if we add to the agenda any other proposition (besides OR's) that uses the same symbols (such as $Q \wedge R$), then only the dictatorship option ($|S| = 1$) is possible. If we add logical OR statements to the agenda (such as $P \vee R$), then the OR function over the judges in $S$ can still be used.
\end{itemize}

While our main results offer original consequences and insight (the most interesting, in our opinion, being that the aggregation function is forced to follow the same logical structure as a compound proposition in $\agenda$, applied to possibly a different number of arguments!), they overlap significantly with existing theorems in \cite{list2013}, so we think of our results primarily as a refinement on the existing work when we have at least one symbol-closed compound proposition. We would not be surprised, for example, if our main results would follow from the existing theorems after some work and case analysis. We believe our paper's main contribution to the literature is actually the act of introducing Fourier analysis techniques, inspired by their success in the closely related social choice framework by works such as \cite{kalai2002}. Our work demonstrates that we can prove strong results fairly efficiently with these methods.

In Section~\ref{sec:prelim}, we give the preliminaries of Arrow's Theorem and List-Pettit's judgment aggregation framework. In Section~\ref{sec:setup}, we abstract the problem and reduce studying aggregation functions to what we call \emph{normal pairs}, which are pairs of functions on the Hamming cube that satisfy a commutation-like property. We believe normal pairs lie at the heart of many social choice / judgment aggregation problems, and may be very useful in future work. In Section~\ref{sec:main}, we state and prove our main result about normal pairs, Theorem~\ref{thm:main}, with Fourier analysis. In Section~\ref{sec:list}, we apply Theorem~\ref{thm:main} to judgment aggregation and put our work in context with the existing literature, comparing it with existing theorems. Finally, in Section~\ref{sec:conclusion}, we conclude with philosophical discussions and ideas for future work.

\section{Preliminaries and Definitions}
\label{sec:prelim}

\subsection{Arrow's Theorem}
In this paper, we call the framework behind Arrow's Theorem the ``social choice framework.'' In this setup, there is a set of $n$ \emph{voters} $[n] = \{1, 2, \ldots, n\}$ voting on a set $X$ of \emph{alternatives}. We present each voter's \emph{preference} $p$ as a \emph{partial order} on $X$, defined as an anti-symmetric, transitive, and reflexive relation. We denote by $P_\geq$ the set of partial orders on $X$. We then define \emph{social preference functions (SPF's)} to be functions $f\colon P_{\geq}^n \rightarrow P_{\geq}$. So a SPF aggregates a profile of $n$ preferences into a single collective preference. 

We now define some properties relevant to a SPF $f$. Suppose $f(p_1, \ldots, p_n) = p$, we define $f$ to be:
\begin{itemize}
\item A SPF $f$ is \emph{Pareto} If for all $i \in N$, $p_i(x) \geq p_i(y)$, then $p(x) \geq p(y)$.
\item A SPF $f$ has \emph{Indifference of Irrelevant Alternatives (IIA)} if $(p_1, \ldots, p_n)$ and $(p_1^*, \ldots, p_n^*)$ are two profiles in the domain of $f$ and there exists $x$ and $y$ such that for all $i$, $p_i(x) \geq p_i(y)$ if and only if $p_i^*(x) \geq p_i^*(y)$, then if $p = f(p_1, \ldots, p_n)$ and $p^* = f(p_1^*, \ldots, p_n^*)$, we must have $p(x) \geq p(y)$ if and only if $p^*(x) \geq p^*(y)$.
\end{itemize}
An enlightening restatement of IIA for our purposes is the following: if we think of a preference $p$ as a function from pairs of alternatives $(x,y)$ to $\{T,F\}$, where we say $p(x,y) = T$ if and only if $p \geq y$, then if $p' = f(p_1, \ldots, p_n)$, it means $p'(x,y)$ only depends on $p_1(x,y), \ldots, p_n(x,y)$. In other words, for all pairs $(x,y)$ there exists a function $p_{x,y}\colon \{T, F\}^n \rightarrow \{T, F\}$ such that $p'(x,y) = p_{x,y}(p_1(x,y), p_2(x,y), \ldots, p_n(x,y))$. We will use this intuition again in the future.

Suppose $f$ is Pareto and IIA. Then we call $f$ \emph{Arrovian}. We rephrase\footnote{Usually, Arrow's Theorem is stated as an ``impossibility theorem.'' That is, it is impossible for a social preference function to be Pareto, IIA, and \textbf{not} a dictatorship. Because our main result is more of a ``classification theorem,'' that is, listing the types of functions (including dictatorships) that satisfy Pareto and IIA, we do not use the ``impossibility'' presentation.} the classical Arrow's theorem in the following form:
\begin{thm}[Arrow's Theorem, \cite{arrow1950}, paraphrased]
\label{thm:arrow}
Let $|X| > 2$. Suppose $f$ is an Arrovian SPF, then $f$ is a dictatorship (that is, $f(p_1, \ldots, p_n) = p_i$ for some $i$).
\end{thm}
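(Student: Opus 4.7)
The plan is to give a classical ``pivotal voter'' style proof (in the spirit of Geanakoplos and Barbera), using only Pareto, IIA, and the assumption $|X| > 2$. The key preliminary step is an \emph{extremal lemma}: for any fixed $b \in X$, if every voter ranks $b$ either strictly first or strictly last in their preference, then the social preference $f(p_1,\ldots,p_n)$ must also rank $b$ first or last. To prove this, I would assume for contradiction that $a > b$ and $b > c$ socially for some $a, c \neq b$. Since $b$ sits at an extreme for every voter, I can modify each voter's preference by moving $c$ just above $a$ without disturbing any $(x,b)$ or $(b,y)$ comparison; by IIA the social rankings of $(a,b)$ and $(b,c)$ are preserved, by Pareto $c$ is now socially above $a$, and transitivity yields the cycle $a > b > c > a$, contradicting that the output of $f$ is a partial order.

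Next I would use the extremal lemma to locate a \emph{pivotal voter}. Consider a sequence of profiles $\pi_0, \pi_1, \ldots, \pi_n$ in which voters $1, \ldots, k$ rank $b$ strictly first and voters $k+1, \ldots, n$ rank $b$ strictly last, with the remaining alternatives arranged in a fixed way. By Pareto, $f(\pi_0)$ ranks $b$ last and $f(\pi_n)$ ranks $b$ first; by the extremal lemma, each intermediate $f(\pi_k)$ also ranks $b$ at an extreme. Hence there is a unique index $k^*$ at which the social ranking of $b$ flips from last to first.

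To show voter $k^*$ is a dictator, I would first establish decisiveness over pairs $(x,y)$ with $x, y \neq b$. Construct the profile in which voters $1, \ldots, k^*-1$ rank $b$ first, voters $k^*+1, \ldots, n$ rank $b$ last, and voter $k^*$ ranks $x > b > y$ with the other alternatives arbitrary. Comparing with $\pi_{k^*}$ via IIA forces $b > y$ socially, and comparing with $\pi_{k^*-1}$ via IIA forces $x > b$ socially, so transitivity gives $x > y$ socially whenever $k^*$ prefers $x$ to $y$. A second IIA argument then promotes this to arbitrary profiles, making $k^*$ decisive on $(x,y)$. To cover pairs involving $b$, I repeat the entire construction with a third alternative $c \neq b$ playing the role of $b$ (this is exactly where $|X| > 2$ is essential), obtaining another pivotal voter; decisiveness on pairs not involving $c$ forces this new pivot to coincide with $k^*$.

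The main obstacle is the bookkeeping in the last step: one has to construct several auxiliary profiles that are simultaneously valid (as strict linear orders, hence as partial orders in the domain of $f$) and whose IIA comparisons line up. The $|X| > 2$ assumption is used essentially throughout, both to make the extremal lemma non-vacuous (one needs $a, c$ distinct from $b$) and to supply a third alternative in the final step. A slicker modern alternative would be a Fourier-analytic proof in the style of Kalai, but the above is more elementary and self-contained, which seems appropriate given that Theorem~\ref{thm:arrow} is stated here largely as a reference point for the judgment-aggregation analogue to follow.
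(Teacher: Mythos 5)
The paper does not actually prove Theorem~\ref{thm:arrow}: it is quoted from \cite{arrow1950} purely as background for the judgment-aggregation analogue, and the only proof the paper gestures at is the footnote in Section~\ref{sec:setup} pointing to \cite{geanakoplos} for a short pivotal-voter argument. Your sketch is precisely that Geanakoplos/Barbera argument, and as a proof of the classical theorem it is essentially correct: the extremal lemma, the location of the first index $k^*$ at which the social position of $b$ flips from bottom to top, decisiveness of voter $k^*$ on pairs avoiding $b$, and the use of a third alternative (where $|X|>2$ enters) to extend decisiveness to pairs containing $b$ are the standard steps and they assemble as you describe. Two points of friction with the statement as this paper phrases it are worth flagging. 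First, the paper takes both the domain and codomain of an SPF to be the set $P_{\geq}$ of \emph{partial} orders, whereas your constructions (placing $b$ strictly first or strictly last, sliding $c$ just above $a$, reading off ``$b$ is not at an extreme'' as the existence of $a > b > c$ socially) implicitly assume complete strict rankings on both sides; you should either state that you work on the subdomain of linear orders and explain why that suffices, or adapt the profile constructions and the extremal-lemma contradiction to genuinely partial orders, where ``not first or last'' does not immediately yield a chain $a > b > c$. Second, ``the unique index at which the social ranking of $b$ flips'' should be ``the first index at which $b$ is no longer ranked last''; uniqueness is neither needed nor obvious at that stage. Neither issue is a gap in the underlying mathematics, only in the bookkeeping, and since the paper offers no proof of its own, your elementary route is a reasonable one to record.
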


\subsection{The Judgment Aggregation Framework}

In the judgment aggregation framework of List and Pettit \cite{list2002aggregating}, $n$ \emph{judges} $[n] = \{1,2,\ldots,n\}$ seek to judge an agenda (set of logical propositions). In this paper, we define an \emph{agenda} $\agenda$ to be a subset of the boolean algebra\footnote{In the treatment of \cite{list2007} and others, the agenda is defined by an arbitrary set related by rules about which subsets of propositions would imply others, without the need of an underlying set of logical symbols. Any such set can actually be embedded in a boolean algebra and we choose to do so in this paper.} of a \emph{symbol set} of logical symbols such that $\agenda$ is closed under negation (i.e. for all $x \in \agenda$, we must have $\negation{x} \in \agenda$). Given $x \in \agenda$, we call $x$ \emph{atomic} if $x$ or $\negation{x}$ belongs to the symbol set and $x$ \emph{compound} otherwise.

We usually describe the agenda by a \emph{basis} $\halfagenda$, which contains exactly one of each of $x$ and $\negation{x}$ for every $x \in \agenda$; note we can recover the agenda $\agenda$ from the basis $\halfagenda$ by adding negations, much in the same way that we can recover a vector space from the basis by allowing multiplication by scalars. For example, a possible symbol set could be $\{P,Q\}$, with $\halfagenda = \{P, \negation{(P \wedge Q)}, (P \vee Q)\}$ and $\agenda = \{P, \negation{P}, (P \wedge Q), \negation{(P \wedge Q)}, (P \vee Q), \negation{(P \vee Q)}\}$. Each judge's \emph{judgment} can then be defined as an assignment of $T$ (True) or $F$ (False) to each proposition in the agenda; formally, a function $\agenda \rightarrow \{T,F\}$. 

If the judges are to make judgments on a proposition, it seems natural for them (at least some of the time) to also be making judgments on its symbols. For example, if the judges were to judge on ``the defendant committed crime A or crime B,'' we would assume they would also argue about whether the defendant committed the individual crimes separately. We say that a compound proposition is \emph{symbol-closed} if all of its symbols are in $\agenda$. For example, in $\halfagenda = \{P, \negation{Q}, \negation{P} \wedge Q\}$, $(\negation{P} \wedge Q)$ is symbol-closed. However, in $\{Q, P \wedge Q\}$, $(P \wedge Q)$ is not symbol-closed.

We say that a set of propositions $\halfagenda$ is \emph{symbol-complete} if the symbol set is in $\agenda$.  Note that when $\fd \in \agenda$ is symbol-closed, $\fd$ and its symbols form a symbol-complete subset of $\agenda$. We denote this subset by $\closure(\fd)$. For example, $\closure(P \wedge Q) = \{P, Q, P \wedge Q\}$.

In this paper, we typically assume our agenda is symbol-complete. This is a very strong assumption. The existing results in \cite{list2007} and \cite{list2013} do not make this assumption and still hold for very general agendas. Compared to these nice results, our restriction gives results that are less generally applicable but stronger when applied. We discuss this trade-off more in Sections~\ref{sec:list} and \ref{sec:conclusion}.

In the social choice framework, the rationality (i.e. if $x \geq y$, $y \geq z$, then $x \geq z$) of a preference is embedded in the requirement that the preference is a partial order. In the judgment aggregation framework, we analogously define $\logical_{\halfagenda}$, the \emph{fully rational} judgments on $\agenda$, to be the set of judgments on $\halfagenda$ such that there exists some assignment of truth values to the symbol set such that each proposition in the judgment (as a function of the symbol set) is consistent with the assignment; for example, a fully rational judgment on $(Q, P \vee Q, P \wedge R)$ could be $\{Q:T, (P \vee Q):T, (P \wedge R):F\}$, corresponding to $(Q=T, P=F, R=F)$. Note that in a fully rational judgment, different truth values must be assigned to $x$ and $\negation{x}$ for all $x \in \halfagenda$. From this, we can see that while we care about the agenda $\agenda$, in $\logical_{\halfagenda}$ it suffices to provide judgment on just the basis $\halfagenda$; in other words, $\logical_{\halfagenda}$ is well-defined. Thus, we usually look at just the judgment restricted to $\halfagenda$ instead of $\agenda$ to avoid redundancy, losing no information.  We call the set of functions $f\colon \logical_{\halfagenda}^n \rightarrow \logical_{\halfagenda}$ \emph{judgment aggregation rules (JAR's) on $\halfagenda$} to aggregate fully rational judgments, as an analogue of social preference functions being used to aggregate non-cyclic preferences. In \cite{list2007}, JAR's are simply called ``aggregation rules.'' We add ``judgment'' to disambiguate the many uses of ``aggregation'' in this paper.

In a symbol-complete $\halfagenda$ with $m$ atomic propositions, there is a bijection between $\logical_{\halfagenda}$ and the $2^m$ assignments of $\{T, F\}$ to the atomic propositions, as every assignment of the atomic propositions uniquely extends to a fully rational assignment to the propositions in $\halfagenda$. 

\subsection{Logic}

We define two propositions (or sets) in $\halfagenda$ to be \emph{symbol-disjoint} if the logical symbols the two propositions (or sets) use are disjoint. As an example, $A \wedge B$ is symbol-disjoint from $C$, but not from $B \wedge C$. Consider a graph $G_{\halfagenda}$ where the vertices are the propositions and we draw an edge between two propositions if and only if they share a symbol. We say that $\halfagenda$ is \emph{symbol-connected} if $G_{\halfagenda}$ is a connected graph. As an example, if $\halfagenda = \{P,Q,R,S, P\wedge Q, R \vee S\}$, the set $\{P, Q, P \wedge Q\}$ is symbol-disjoint from the set of the other $3$ propositions, so the graph $G_{\halfagenda}$ splits into two connected components and is therefore not symbol-connected.

Given a set $S$, let $\{T, F\}^S$ be the set of $\{T, F\}$-valued functions on $S$. One of our main ideas (which lets us use Fourier analysis) is that we interchangeably think of a proposition as an element of the boolean algebra and as a $\{T, F\}$-valued function on $\{T, F\}^S$, where $S$ are the symbols in the proposition. We now make some relevant notations and definitions:
\begin{itemize}
\item For any function $g\colon D \rightarrow \{T, F\}$ where $D \subset \{T, F\}^S$, we say that \emph{$g$ is ($s$-) pivotal at $v$} if $s \in S$, $v \in \{T, F\}^S$, $v'$ is defined to be $v$ with the $s$-indexed element flipped, and $g(v) \neq g(v')$. Note this requires both $v$ and $v'$ to be in the domain $D$. Also note that $g$ is $s$-pivotal at $v$ if and only if it is $s$-pivotal at $v'$ as well. 
\item In the setup above, we say that $s$ is \emph{irrelevant} for the function $g$ if $g$ is not $s$-pivotal anywhere. We say $s$ is \emph{relevant} otherwise.
\end{itemize}

Note that we can assume that any symbol appearing in a compound proposition $\fd \in \agenda$ must be relevant for $\fd$ (for example, $P$ is irrelevant to $\fd = (P \wedge \negation{P}) \vee Q \vee R$, but we can simplify $\fd$ to $Q \vee R$ in this case). This removes some pathological cases.

We say that $\{v_s\}_{s \in S}$ form a set of \emph{consistent} values for a set of propositions $S \subset \halfagenda$ if there exist assignments of $\{T, F\}$ to the symbol set such the induced logical values of each $s \in S$ equals $v_s$.  We denote the set of sets of consistent values for $S$ by $Cons(S)$. Equivalently, a set is in $Cons(S)$ if it is the restriction of some element in $\logical_{\halfagenda}$ to $S$. For a proposition $c$ and a set of propositions $S \subset \halfagenda$, we say that $c$ is \emph{determined by $S$} if $c$ can be written as a function from $Cons(S)$ to $\{T, F\}$. For example, $P$ and $\negation{(Q \wedge R)}$ determine $P \vee (Q \wedge R)$. A recurring theme is that a compound proposition is determined by its symbols.

\subsection{Fourier Analysis on the Hamming Cube}

We present some elementary Fourier analysis tools in style of \cite{kalai2002} to use in Section~\ref{sec:main}. See e.g. \cite{odonnell2014} for a survey, including what we use here and beyond.

Given a function $f\colon \{1, -1\}^n \rightarrow \{1, -1\}$, we can decompose the function as 
\[
f(s_1, \ldots, s_n) = \sum_{R \subset [n]} \h{f}(R) s_R,
\]
where we define $s_R = \prod_{r \in R} s_r$. These \emph{Fourier coefficients} $\h{f}(R)$ satisfy $\sum_{R \subset [n]} \h{f}^2(R) = 1$. 

Given the desired values of $f$, we can solve for $f$ via interpolation because the function $\prod \frac{1}{2^n}(1 \pm x_1)(1 \pm x_2)\cdots (1 \pm x_n)$ returns $1$ on a single list $(x_1, \ldots, x_n) \in \{1, -1\}^n$ and $0$ for all other lists. Doing so for a few key functions gives:
\begin{itemize}
\item the constant function is the function $\pm 1$, so its only nonzero Fourier coefficient is $\h{f}(\emptyset) = \pm 1$. 
\item the XOR function is the function $f(s_1, \ldots, s_n) = (-1)^ns_1 s_2 \cdots s_n$, so its only nonzero Fourier coefficient is $\h{f}([n]) = (-1)^n$. 
\item the NXOR function is the negation of the XOR function, or $f(s_1, \ldots, s_n) = (-1)^{n+1}s_1 s_2 \cdots s_n$. It also only has a single nonzero Fourier coefficient $\h{f}([n]) = (-1)^{n+1}$.
\item the AND function is the function $-1 + \frac{1}{2^{n-1}}(1+s_1)\cdots (1+s_n)$, so for all $R \neq \emptyset$, the Fourier coefficient is $\h{f}(R) = \frac{1}{2^{n-1}}$. For the empty set we must subtract the $1$ in the beginning to get $\h{f}(\emptyset) = \frac{1}{2^{n-1}} - 1$. 
\item the OR function is the function $1 - \frac{1}{2^{n-1}}(1-s_1)\cdots (1-s_n)$, so for all $R \neq \emptyset$, the Fourier coefficient is $\h{f}(R) = (-1)^{|R|+1}\frac{1}{2^{n-1}}$. For the empty set we must add the $1$ in the beginning to get $\h{f}(\emptyset) = 1 - \frac{1}{2^{n-1}}$. 
\end{itemize}

Given a function $f\colon \{-1, 1\}^n \rightarrow \{-1, 1\}$, for $i \in \{1, 2, \ldots, n\}$ and $x, y \in \{1, -1\}$, we say that $f$ \emph{forces $y$ with $x$ at index $i$} if whenever $r = (r_1, \ldots, r_{n})$ with $r_i = x$, we have $f = y$. In this case, we say argument $i$ is \emph{forceable} for $f$. Otherwise, we say index $i$ is \emph{free} for $f$. We say such a function $f$ is \emph{forceful} if $f$ is forceable at every index. 

\begin{lem}
\label{lem:forceful}
Suppose $f\colon \{-1, 1\}^n \rightarrow \{-1, 1\}$ where $(n > 1)$ is a non-constant forceful function. Then it can be written as 
\[
f(r_1, \ldots, r_n) = -c_0 + c_0 \frac{1}{2^{n-1}} (1 + c_1r_1) (1 + c_2r_2) \cdots (1 + c_nr_n)
\]
for arbitrary choices of $c_0, c_1, \ldots, c_n \in \{-1,1\}$. In this case, $\h{f}(S) = c_0(\prod_{s \in S}c_s)/2^{n-1}$ for all $S \neq \emptyset$, and $\h{f}(\emptyset) = c_0(1/2^{n-1}-1)$.
\end{lem}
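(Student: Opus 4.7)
The plan is to first show that every non-constant forceful function on $\{-1,1\}^n$ with $n > 1$ is constant except at a single input, and then verify that the displayed formula parameterizes exactly such functions. Once this structural picture is in place, computing the Fourier coefficients is a one-line expansion.

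For the structural step, by forcefulness there exist, for each index $i$, values $x_i, y_i \in \{-1,1\}$ such that $r_i = x_i$ implies $f(r) = y_i$. The main substantive observation is that the $y_i$ must all coincide: for any two indices $i \neq j$, since $n > 1$ we can choose $r$ with $r_i = x_i$ and $r_j = x_j$ simultaneously, and both forcing constraints apply at that point, yielding $y_i = y_j$. Call this common value $y$. Then $f(r) = y$ whenever at least one coordinate agrees with its $x_i$, and the set of $r$ where no coordinate agrees is the single point $r^* = (-x_1, \ldots, -x_n)$. Since $f$ is non-constant, $f(r^*) = -y$.

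To match the formula, set $c_0 = -y$ and $c_i = -x_i$, so that $r^* = (c_1, \ldots, c_n)$. The product $\prod_{i=1}^{n}(1 + c_i r_i)$ equals $2^n$ at $r = r^*$ (every factor is $2$) and $0$ elsewhere (some factor vanishes), so
\[
-c_0 + \frac{c_0}{2^{n-1}}\prod_{i=1}^n (1 + c_i r_i)
\]
takes the value $c_0 = -y$ at $r^*$ and $-c_0 = y$ elsewhere, matching $f$. Expanding $\prod_i (1 + c_i r_i) = \sum_{S \subseteq [n]} \bigl(\prod_{i \in S} c_i\bigr) r_S$, multiplying by $c_0/2^{n-1}$, and correcting the empty-set term by $-c_0$ yields the stated Fourier coefficients. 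The only real hurdle is the consistency argument forcing all $y_i$ to agree, and this is precisely where the hypothesis $n > 1$ is used; the rest is routine verification.
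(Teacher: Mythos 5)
Your proof is correct and follows essentially the same route as the paper's: show all the forced values $y_i$ must agree (using $n>1$ to activate two forcing constraints at once), conclude $f$ is constant except at the single point $(-x_1,\ldots,-x_n)$, and recover the displayed formula by interpolation. Your write-up is somewhat more explicit about the sign bookkeeping ($c_0=-y$, $c_i=-x_i$) than the paper's, but there is no substantive difference.
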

\begin{proof}
If $f$ is forceful, then $f$ forces some $y_i \in \{-1, 1\}$ with $x_i \in \{-1,1\}$ at index $i$ for all $i$. We must have all the $y_i$'s equal to a constant $y$, otherwise we can force two contradictory results since $n \geq 2$ (if $x_i$ forces $y_i$ and $x_j$ forces $\negation{y_i}$, then we get a contradiction when looking at any set of arguments including $x_i$ and $x_j$ in the $i$-th and $j$-th spots, respectively). We now know that $f$ takes the same value $y$ at every argument but one (namely, when we pick $-x_i$ for each index $i$. Linear interpolation of $f$ at the $x_i$ then gives the above form.
\end{proof}

In particular, note that OR and AND are forceful functions. For example, the OR function forces $1$ (True) if any argument equals $1$, and can be written $1-\frac{1}{2^{n-1}} (1-r_1)\cdots(1-r_n)$. 

\subsection{The Main Question}

Since our goal is an analogue of Arrow's Theorem, we now give some analogous definitions in the judgment aggregation framework to the concepts used in Arrow's Theorem. We follow the nomenclature of \cite{list2013}.

  
\begin{itemize}
\item A JAR $f$ is \emph{Unanimity-Preserving (UP)} if for every proposition $x \in \halfagenda$, if for all $i \in [n]$, $p_i(x) = T$ (respectively $F$), then $f(p_1, p_2, \ldots, p_n)(x) = T$ (respectively $F$). 
\item A JAR $f$ is \emph{Propositionally Independent (PI)} if for each proposition $x \in \halfagenda$ there is a function $f_x$ such that 
\[
f(p_1, \ldots, p_n)(x) = f_x(p_1(x), p_2(x), \ldots, p_n(x))
\]
for all $(p_1, \ldots, p_n)$. 
\end{itemize}
These are completely analogous to Pareto and IIA, respectively. As in Section~\ref{sec:prelim}, we say that a judgment aggregation rule is \emph{Arrovian} if it is both UP and PI. We mention a couple of similar notions from \cite{list2002aggregating}:
\begin{itemize}
\item A JAR $f$ has \emph{Anonymity} if $f$ is invariant under permutations on the judges. In other words, rearranging the arguments $p_i$ of $f(p_1, \ldots, p_n)$ does not change the judgment.
\item A JAR $f$ has \emph{Systematicity} if there exists a function $f'\colon \{0,1\}^n \rightarrow \{0,1\}$ such that, for any proposition $x \in \agenda$, $f(p_1, \ldots, p_n)(x) = f'(p_1(x), \ldots, p_n(x))$.
\end{itemize}
These were originally used in List-Pettit's impossibility theorem in \cite{list2002aggregating}. In Section~\ref{sec:list}, we look at some consequences of our main result when an Arrovian JAR is also forced to have these properties.

The main question, then, is:

\begin{tcolorbox}
For a symbol-complete basis $\halfagenda$, what Arrovian JAR's of $\halfagenda$ exist?

\end{tcolorbox}

\section{Setting up the Main Problem}
\label{sec:setup}

We give a few definitions and Lemmas that help us reduce the problem into smaller parts in prepration for Section~\ref{sec:main}. If $f$ is an JAR with $n$ judges on a basis $\halfagenda$, then for any subset $\halfagenda' \subset \halfagenda$, restricting each judgment from $\halfagenda$ to $\halfagenda'$ gives a function $f'\colon \logical_{\halfagenda'}^n \rightarrow \logical_{\halfagenda'}$, which we call the \emph{restriction of $f$ to $\halfagenda'$}. We make a few observations:
\begin{itemize}
\item For any $\halfagenda' \subset \halfagenda$, if $f$ is an Arrovian JAR on $\halfagenda$, the restriction of $f$ to $\halfagenda'$ is also Arrovian.
\item If a set of propositions $\halfagenda$ is not symbol-connected, then $G_{\halfagenda}$ decomposes into a disjoint union $G_{\halfagenda_1} \cup \cdots \cup G_{\halfagenda_k}$ of symbol-connected components, and the set of Arrovian JAR's for $\halfagenda$ is in bijection with the cartesian product of the sets of Arrovian JAR's on $\halfagenda_1, \halfagenda_2, \ldots, \halfagenda_k$. 
\item In the above decomposition, if $\halfagenda$ is symbol-complete, then each of the $\halfagenda_i$ is also symbol-complete.
\end{itemize}

The proofs of these observations are obvious and we omit them. The first observation allows us to freely consider subsets of our agenda without worrying about $f$. In particular, we mostly apply this when we analyze $\closure(\fd)$ for a symbol-closed compound proposition $\fd$. The second and third observations say that instead of doing classifying symbol-complete $\halfagenda$ directly, we can (and should) limit our attention to symbol-connected subsets of $\halfagenda$, who themselves must be symbol-complete. 

\begin{lem}
\label{lem:connected-determined}
If $\halfagenda$ is symbol-complete and symbol-connected, then every compound proposition $\fd \in \halfagenda$ is determined by $\halfagenda \backslash \fd$.
\end{lem}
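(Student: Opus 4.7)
The plan is to argue directly from the definitions. Symbol-completeness forces every atomic proposition (up to negation) into $\halfagenda$, and these atoms are not removed when we delete the compound proposition $\fd$. Since the atoms determine everything in a fully rational judgment, they in particular determine $\fd$.

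First I would fix a symbol $s$ used in $\halfagenda$. By symbol-completeness, $s \in \agenda$, and since $\halfagenda$ is a basis, exactly one of $s, \negation{s}$ lies in $\halfagenda$; call this element $a_s$. Because $a_s$ is atomic while $\fd$ is compound, we have $a_s \neq \fd$, so $a_s \in \halfagenda \backslash \fd$. This collects all atoms (possibly negated) of the symbol set into $\halfagenda \backslash \fd$.

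Next I would recall that an element $p \in \logical_{\halfagenda}$ arises from a single truth assignment to the underlying symbol set, extended to all of $\halfagenda$ according to the logical structure of each proposition. The values $\{p(a_s)\}_s$ recover the value each symbol $s$ takes under this assignment (reading $p(\negation{s}) = F$ as $s = T$ in the case $a_s = \negation{s}$). The truth value of $\fd$ is a deterministic function of its symbol assignment, so it is in turn a function of the values $\{p(a_s)\}_s$. Consequently, if two judgments $p_1, p_2 \in \logical_{\halfagenda}$ agree on all of $\halfagenda \backslash \fd$, they agree on every $a_s$, hence on $\fd$. By the definition of ``determined by,'' this says that $\fd$ factors through $\on{Cons}(\halfagenda \backslash \fd)$, which is the claim.

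I do not expect any real obstacle here; the lemma is essentially a direct unpacking of definitions. In fact, the argument uses only symbol-completeness and the compoundness of $\fd$; symbol-connectedness does not appear to play a role for this particular statement, and is presumably carried along as a standing hypothesis for uniformity with the subsequent results in Section~\ref{sec:main}.
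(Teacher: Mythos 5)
Your proposal is correct and follows essentially the same route as the paper's own (much terser) proof: symbol-completeness places all the atoms of $\fd$ in $\halfagenda \backslash \fd$, and compoundness makes $\fd$ a function of those atoms. Your observation that symbol-connectedness is not actually needed here is also consistent with the paper's argument, which likewise never invokes it.
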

\begin{proof}
As $\fd$ is compound, it is a function of its symbols and thus determined by its symbols. As $\halfagenda$ is symbol-complete, $\halfagenda \backslash \fd$ contains all the symbols of $\fd$ and thus determines $\halfagenda$. 
\end{proof}

\begin{cor}
If $\fd$ is symbol-closed in $\halfagenda$, then $\fd$ is determined by $\closure(\fd) \backslash \fd$.
\end{cor}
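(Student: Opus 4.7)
The plan is to deduce this as a direct application of Lemma~\ref{lem:connected-determined} with the ambient basis replaced by $\closure(\fd)$. The only thing to check is that $\closure(\fd)$ satisfies the hypotheses of that lemma, namely symbol-completeness and symbol-connectedness, and that $\fd$ is a compound element of this smaller set.

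First, I would recall from the preceding discussion that $\closure(\fd)$ is defined as $\fd$ together with all of its symbols, and that it was already observed in the text to be symbol-complete (its symbol set is precisely the symbol set of $\fd$, all of which lie inside $\closure(\fd)$). Next I would verify symbol-connectedness of the graph $G_{\closure(\fd)}$: using the standing assumption that each symbol appearing in a compound proposition is relevant, every atomic symbol $s$ of $\fd$ is an element of $\closure(\fd)$ that shares the symbol $s$ with $\fd$. So in $G_{\closure(\fd)}$ the vertex $\fd$ is connected by an edge to every other vertex, giving a star-shaped graph (with $\fd$ at the center) that is trivially connected.

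With these two properties in hand, I would apply Lemma~\ref{lem:connected-determined} to the subset $\closure(\fd) \subset \halfagenda$ (treated as its own basis) and to the compound proposition $\fd \in \closure(\fd)$, concluding immediately that $\fd$ is determined by $\closure(\fd) \setminus \fd$.

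I do not anticipate any real obstacle here: the content of the corollary is just that the symbol-complete, symbol-connected hypotheses of Lemma~\ref{lem:connected-determined} are automatically satisfied by the canonical neighborhood $\closure(\fd)$ of a symbol-closed compound proposition. The only subtlety worth flagging is the tacit reliance on the convention that irrelevant symbols have been simplified away from $\fd$, which is exactly what guarantees the star-shape in $G_{\closure(\fd)}$.
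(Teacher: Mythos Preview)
Your proposal is correct and matches the paper's approach: the corollary is stated there without proof, as an immediate consequence of applying Lemma~\ref{lem:connected-determined} to the subset $\closure(\fd)$, and your verification that $\closure(\fd)$ is symbol-complete and symbol-connected is exactly what is needed. If anything you are more careful than the paper requires, since the proof of Lemma~\ref{lem:connected-determined} only actually uses symbol-completeness (the symbol-connected hypothesis is idle there), so the star-graph check, while correct, is not strictly necessary.
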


We warn the reader that the adjectives ``symbol-complete'' and ``compound'' in Lemma~\ref{lem:connected-determined} are necessary, suggesting that the problem is much harder without them. If $\halfagenda$ were not symbol-complete, $\halfagenda \backslash \fd$ does not have to determine $\fd$; e.g. $\halfagenda = \{A \vee B, B \vee C\}$. Even in the symbol-complete $\halfagenda = \{A, B, A \vee B\}$, the atomic proposition $A$ is not determined by the other propositions (though e.g. if $B = F$ and $A \vee B = F$, we can deduce $A = T$, but if $B = T$ and $A \vee B = T$ we know nothing about $A$).

We now give an important and rather surprising lemma that does not depend on symbol-completeness. Given a function $f\colon \{T, F\}^n \rightarrow \{T, F\}$, define $\flip(f)$ to be the function $f'$ with the same domain and codomain such that $f'(s_1, \ldots, s_n) = \negation{f(\negation{s_1}, \ldots, \negation{s_n})}$ for all $\{s_i\}$. Note that $\flip(\flip(f)) = f$, so $\flip$ is an involution on the space of such functions.
\begin{lem}
\label{lem:PI-doctrinal-strong}
Let $y \in \halfagenda$ depend on  $x \in \halfagenda$. Furthermore, assume that $y$ is determined by $\halfagenda \backslash y$. If $f$ is an Arrovian JAR on $\halfagenda$, we must have $f_y = f_x$ or $f_y = \flip(f_x)$.
\end{lem}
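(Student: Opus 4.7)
The plan is to exploit the dependency of $y$ on $x$ to build, for an arbitrary bit-pattern $(a_1, \ldots, a_n) \in \{T,F\}^n$, a profile of fully rational judgments whose $x$-column equals $(a_1, \ldots, a_n)$ and whose $y$-column is either the same tuple or its entry-wise negation, and then to use UP and PI to read off the relation between $f_x$ and $f_y$ from the determinacy of $y$.

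First I would unpack the hypotheses. Since $y$ is determined by $\halfagenda \backslash y$, there is a well-defined function $\phi\colon Cons(\halfagenda \backslash y) \to \{T,F\}$ sending each consistent sub-valuation to the forced value of $y$. The phrase ``$y$ depends on $x$'' must then mean that $\phi$ is $x$-pivotal somewhere: there exist $v, v' \in Cons(\halfagenda \backslash y)$ that agree on every proposition in $\halfagenda \backslash \{x,y\}$, differ on $x$, and satisfy $\phi(v) \neq \phi(v')$. After possibly swapping names, assume $v(x) = T$ and $v'(x) = F$; then exactly one of the cases $(\phi(v),\phi(v')) = (T,F)$ (``aligned'') or $(\phi(v),\phi(v')) = (F,T)$ (``flipped'') occurs.

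Next, for an arbitrary tuple $(a_1, \ldots, a_n)$ I would define judge $i$'s judgment $p_i \in \logical_{\halfagenda}$ to be the unique rational extension of $v$ when $a_i = T$ and of $v'$ when $a_i = F$. By construction each $p_i(x) = a_i$, and since every $p_i$ agrees with both $v$ and $v'$ on the common sub-valuation over $\halfagenda \backslash \{x,y\}$, UP forces $f(p_1,\ldots,p_n)$ to agree on that subset as well. Combined with PI applied at $x$, the restriction of the aggregate judgment to $\halfagenda \backslash y$ equals $v$ when $f_x(a_1,\ldots,a_n) = T$ and equals $v'$ when $f_x(a_1,\ldots,a_n) = F$. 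Applying $\phi$ to this restriction then pins down $f(p_1,\ldots,p_n)(y)$ as $\phi(v)$ or $\phi(v')$ according to the value of $f_x$, while PI applied at $y$ identifies the same quantity with $f_y$ evaluated on the $y$-column of the profile.

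Finally I would finish via the two cases. In the aligned case, the $y$-column of the constructed profile is $(a_1,\ldots,a_n)$ itself, and the computation above yields $f_y(a_1,\ldots,a_n) = f_x(a_1,\ldots,a_n)$ for every tuple; in the flipped case, the $y$-column is the entry-wise negation, and the computation yields $f_y(\negation{a_1},\ldots,\negation{a_n}) = \negation{f_x(a_1,\ldots,a_n)}$, which is precisely $f_y = \flip(f_x)$. The main obstacle is conceptual rather than computational: I have to be sure that every tuple $(a_1,\ldots,a_n)$ is realised by some \emph{rational} profile, and this is exactly what having a single witness pair $(v,v')$ buys, because each judge can independently choose one of two fully rational extensions without risking any cross-judge inconsistency. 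Once that is internalised, the rest is bookkeeping with UP, PI, and $\phi$.
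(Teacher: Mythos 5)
Your proposal is correct and follows essentially the same route as the paper: it picks a single pivotal witness pair (your $v,v'$ are the paper's $\pi,\pi'$ restricted away from $y$), builds the profile by assigning each judge one of the two extensions according to an arbitrary bit-pattern, uses UP to pin the aggregate on the common propositions and PI at $x$ plus determinacy of $y$ to pin the rest, and concludes with the same aligned/flipped case split yielding $f_y = f_x$ or $f_y = \flip(f_x)$. The only differences are presentational (working with $Cons(\halfagenda \backslash y)$ and the function $\phi$ rather than with full judgments).
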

\begin{proof}
Since $y$ is dependent on $x$, $y$ must be $x$-pivotal somewhere. In other words, there exists consistent judgments $\pi$ and $\pi'$ such that $\pi(x) = T$, $\pi'(x) = F$, and $\pi(y) \neq \pi'(y)$, but for all other propositions $a' \in \halfagenda$, $\pi(a') = \pi'(a')$. 

Now, fix $(s_1, \ldots, s_n) \in \{T, F\}^n$. For each $i$, if $s_i = T$ or $F$ respectively, let $p_i = \pi$ or $\pi'$ respectively. Let $p = f(p_1, \ldots, p_n)$ be the aggregated judgment. For example, the following illustrates the aggregation for $(s_1, \ldots, s_5) = (T, F, T, F, T)$:
\[
\begin{matrix}
\pi(a_1) & \pi(a_1) & \pi(a_1) & \pi(a_1) & \pi(a_1) & \rightarrow & \pi(a_1) \\
\pi(a_2) & \pi(a_2) & \pi(a_2) & \pi(a_2) & \pi(a_2) & \rightarrow & \pi(a_2) \\
\vdots & \vdots & \vdots & \vdots & \vdots & \rightarrow & \vdots \\
T & F & T & T & F & \rightarrow & f_x(T, F, T, T, F) \\
\vdots & \vdots & \vdots & \vdots & \vdots & \rightarrow & \vdots \\
\pi(a_{m}) & \pi(a_{m}) & \pi(a_m) & \pi(a_{m}) & \pi(a_{m}) & \rightarrow & \pi(a_m) \\
\downarrow & \downarrow & \downarrow & \downarrow & \downarrow & & \downarrow \\
w = \pi(y) & \negation{w} = \negation{\pi'(y)} & w & w & \negation{w} & \rightarrow & f_y(w, \negation{w}, w,w, \negation{w})
\end{matrix}
\]

By construction, all propositions not equal to $x$ or $y$ are identically judged by all the judgments $p_i$. By UP, this means $p(z) = \pi(z) = \pi'(z)$ for all propositions $z \notin \{x,y\}$. By the constraint that judgments must be logically consistent, if $p(x) = T$ we must have $p = \pi$ (because $p$ and $\pi$ match on all the non-$y$ propositions and $y$ is determined by the other propositions) and if $p(x) = F$ we must have $p = \pi'$. By construction, the truth values of $x$ and $y$ in $(p_1, \ldots, p_n)$ will either be always equal (in which case their values in the aggregation are also equal) or always opposite (in which case their values in the aggregation are also opposite). As we can do this for all $2^n$ choices of $(s_1, \ldots, s_n)$, we see that for any input in $\{T, F\}^n$, the function $f_y$ is always consistently equal to $f_x$ (when $\pi(y) = T$) or the negation of $f_x$ applied to the negation of its input (when $\pi(y) = F$), so our result is proved.
\end{proof}

\begin{cor}
\label{cor:strong}
Let $\halfagenda$ be symbol-complete and symbol-connected and $f$ be an Arrovian JAR on $\halfagenda$. Then for every two propositions $x, y \in \halfagenda$, we must have $f_y = f_x$ or $f_y = \flip(f_x)$.
\end{cor}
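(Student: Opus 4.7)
The plan is to reduce the general claim to iterated applications of Lemma~\ref{lem:PI-doctrinal-strong}, stringing together a chain of propositions in $\halfagenda$ along which the lemma applies at every step. The involution $\flip$ makes the relation ``$f_a = f_b$ or $f_a = \flip(f_b)$'' an equivalence relation, so transitivity along the chain yields the conclusion between the endpoints.

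First I would verify that the hypotheses of Lemma~\ref{lem:PI-doctrinal-strong} are satisfied whenever we have a compound proposition $c \in \halfagenda$ and one of its symbols $s$. By the remark preceding the definition of ``consistent values,'' we may assume every symbol of a compound proposition is relevant for it, so $c$ is $s$-pivotal somewhere, i.e.\ $c$ depends on $s$. Since $\halfagenda$ is symbol-complete, $s \in \halfagenda$. Since $\halfagenda$ is symbol-complete and symbol-connected, Lemma~\ref{lem:connected-determined} gives that $c$ is determined by $\halfagenda \setminus c$. Hence Lemma~\ref{lem:PI-doctrinal-strong} applies with $y = c$ and $x = s$, yielding $f_c = f_s$ or $f_c = \flip(f_s)$.

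Next I would build a chain from $x$ to $y$ inside $\halfagenda$ so that every consecutive pair is of the above compound/symbol form. By symbol-connectedness of $G_{\halfagenda}$, fix a path $x = w_0, w_1, \ldots, w_\ell = y$ in $G_{\halfagenda}$, so each $w_i, w_{i+1}$ share some symbol $s_i$. Note that two distinct atomic propositions in the basis $\halfagenda$ cannot share a symbol, so at least one of $w_i, w_{i+1}$ is compound. If both are compound, I would insert the shared symbol $s_i$ (which lies in $\halfagenda$ by symbol-completeness) between them, replacing the edge $w_i$–$w_{i+1}$ by the two-edge path $w_i, s_i, w_{i+1}$. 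If exactly one of $\{w_i, w_{i+1}\}$ is atomic, then it must in fact \emph{be} the shared symbol $s_i$, so no insertion is needed. After this refinement, every consecutive pair in the enlarged chain consists of a compound proposition together with one of its symbols.

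Finally I would iterate Lemma~\ref{lem:PI-doctrinal-strong} along this chain. At each step we obtain $f_{z_i} = f_{z_{i+1}}$ or $f_{z_i} = \flip(f_{z_{i+1}})$, and since $\flip$ is an involution that commutes with itself, composing these alternatives along the chain collapses to the single conclusion $f_y = f_x$ or $f_y = \flip(f_x)$. The only real subtlety, and thus the main point requiring care, is the refinement step that inserts atomic propositions between two compound vertices so that Lemma~\ref{lem:PI-doctrinal-strong} is genuinely applicable at every link; symbol-completeness is exactly what makes this insertion legal, which is why the corollary is stated under that hypothesis.
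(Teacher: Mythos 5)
Your proposal is correct and follows essentially the same route as the paper's own proof: apply Lemma~\ref{lem:PI-doctrinal-strong} (via Lemma~\ref{lem:connected-determined}) to each compound--symbol pair, refine a path in $G_{\halfagenda}$ so that consecutive vertices alternate between compounds and their symbols, and use that $\flip$ is an involution to collapse the chain to ``$f_y = f_x$ or $f_y = \flip(f_x)$.'' Your write-up simply makes the path-refinement step more explicit than the paper does.
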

\begin{proof}
Consider every instance of a symbol $x$ (which must be in $\halfagenda$ since it is symbol-complete) that appears in a compound proposition $y$. By Lemma~\ref{lem:PI-doctrinal-strong} and Lemma~\ref{lem:connected-determined}, we have $f_y = f_x$ or $\flip(f_x)$. Because $G_{\halfagenda}$ is symbol-connected, there is a path between every two propositions using only edges between symbols and compounds (no two symbols share a symbol, and if $2$ compounds share a symbol, they must each have an edge to the symbol). Thus, because $\flip()$ is an involution, any two propositions must actually be connected by $0$ or $1$ $\flip()$ operation, which is what we desired.
\end{proof}

\begin{cor}
\label{cor:nosigns}
Let $\halfagenda$ be symbol-complete and symbol-connected and there exists an Arrovian JAR $f$ on $\halfagenda$. Then there exists $\halfagenda'$ representing the same agenda as $\halfagenda$ and $f_x = f_y$ for all $x, y \in \halfagenda'$. 
\end{cor}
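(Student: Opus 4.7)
The plan is to exploit Corollary~\ref{cor:strong} by choosing a different representative for each pair $\{y, \negation{y}\}$ in the basis so as to absorb any stray $\flip$. Recall that the basis $\halfagenda$ only determines $\agenda$ up to the choice of which of $y$ or $\negation{y}$ we include, so swapping $y$ for $\negation{y}$ does not change the underlying agenda.

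First, I would fix any $x \in \halfagenda$ as a base point. By Corollary~\ref{cor:strong}, for each $y \in \halfagenda$ either $f_y = f_x$ or $f_y = \flip(f_x)$, so the basis splits into two classes $A = \{y \in \halfagenda : f_y = f_x\}$ and $B = \{y \in \halfagenda : f_y = \flip(f_x)\}$. I would then check the key identity
\[
f_{\negation{y}} = \flip(f_y)
\]
for every $y \in \halfagenda$. This is immediate from full rationality: any $p_i \in \logical_{\halfagenda}$ satisfies $p_i(\negation{y}) = \negation{p_i(y)}$, and the aggregated judgment satisfies the same relation, so setting $r_i = p_i(\negation{y})$ gives $f_{\negation{y}}(r_1, \ldots, r_n) = \negation{f_y(\negation{r_1}, \ldots, \negation{r_n})} = \flip(f_y)(r_1, \ldots, r_n)$.

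Second, I would form $\halfagenda'$ by keeping every element of $A$ as-is and replacing each $y \in B$ by $\negation{y}$. By the identity above, each replaced proposition $\negation{y}$ has $f_{\negation{y}} = \flip(f_y) = \flip(\flip(f_x)) = f_x$, while every unchanged $y \in A$ already has $f_y = f_x$. Therefore $f_z = f_x$ for every $z \in \halfagenda'$, giving the desired uniformity.

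There is no real obstacle here beyond checking the $f_{\negation{y}} = \flip(f_y)$ identity; the work was already done in Corollary~\ref{cor:strong}. The only subtlety is the bookkeeping point that $\halfagenda'$ still represents the same agenda $\agenda$: since $\agenda$ is closed under negation by definition and $\halfagenda'$ is obtained from $\halfagenda$ purely by swapping some elements with their negations, both bases generate the same $\agenda$ upon closing under negation.
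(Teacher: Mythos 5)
Your proposal is correct and follows essentially the same route as the paper: invoke Corollary~\ref{cor:strong} to split the basis into the two classes, then negate every proposition in the $\flip$ class, using $f_{\negation{y}} = \flip(f_y)$ and the involutivity of $\flip$. The only difference is that you spell out the justification of $f_{\negation{y}} = \flip(f_y)$ from full rationality, which the paper merely asserts.
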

\begin{proof}
First, note that replacing a proposition $x$ in $\halfagenda$ by $\negation{x}$ preserves $\agenda$, but $f_{\negation{x}} = \flip(f_x)$. For any $y \in \halfagenda$, define $\halfagenda_1$ to be all the elements $x \in \halfagenda$ (including $y$) with $f_x = f_y$ and define $\halfagenda_2$ to be all the elements $x \in \halfagenda$ with $f_x = \flip(f_y)$. Now, we may replace all the propositions $x \in \halfagenda_2$ by $\negation{x}$. The resulting set of propositions $\halfagenda'$ has the same agenda as $\halfagenda$, but now has $f_x = f_y$ for all $x, y \in \halfagenda'$.
\end{proof}

Corollary~\ref{cor:strong} is very strong; it tells us that we can restrict our attention to $\halfagenda$ where all the $f_x$'s guaranteed by the PI condition on an Arrovian JAR $f$ are actually the same function, which means we almost have Systematicity\footnote{A similar phenomenon happens in proofs of Arrow's Theorem (see \cite{geanakoplos} for a short example), where potentially different aggregation functions on each ``component'' are forced to be identical. In the judgment aggregation framework, equivalent ideas have also been found in e.g. \cite{list2007} and \cite{list2013}.} (the difference between our situation and Systematicity is that on $\agenda\backslash \halfagenda$, the functions are all flips of the functions in $\halfagenda$), despite assuming only PI. If this happens, we say that the $f$ is in \emph{normal form} for $\halfagenda$ and denote, with abuse of notation, the unified function $\fr\colon \{T, F\}^n \rightarrow \{T,F\}$ (that is, $f_x(s_1, \ldots, s_n) = \fr(s_1, \ldots, s_n)$ for all $x \in \halfagenda$ and $(s_1, \ldots, s_n) \in \{T, F\}^n$). 

We now have a very clean and symmetric way to think about how an Arrovian JAR $f$ interacts with a symbol-closed compound proposition $\fd$ in $\halfagenda$. $\closure(\fd) = \{a_1, \ldots, a_m, \fd\}$ is a symbol-closed and symbol-connected subset of $\agenda$. Arrange the $n$ judgments on the $m$ propositions $a_i$ in a $m \times n$ matrix $M$, where $M_{ij} = p_j(a_i)$ is the judgment of judge $j$ on proposition $a_i$. We can think of $\fd$ as a function $\{T, F\}^m \rightarrow \{T, F\}$ that takes the values of the $a_i$ as input, applied to each column. Because $f$ is in normal form, we have a function $\fr\colon \{T, F\}^n \rightarrow \{T, F\}$ applied to each row.  In this setup, we are asking the following two operations to give the same answer:
\begin{itemize}
\item For each row $i$, we compute $\fr(M_{i1}, M_{i2}, \ldots)$ to obtain aggregate propositional judgments $j_1, j_2, \ldots, j_m$, then compute $\fd(j_1, j_2, \ldots, j_m)$.
\item For each column $j$, we compute $\fd(M_{1j}, M_{2j}, \ldots)$ to obtain aggregate compound judgments $k_1, k_2, \ldots, k_n$, then compute $\fr(k_1, k_2, \ldots, k_n)$.
\end{itemize}
We make one more reduction. Recall that we assumed all the symbols in $\fd$ are relevant to $\fd$. Similarly, we can assume all judges are relevant for $f$ by removing irrelevant judges. To obtain solutions for the general case, all we have to do is to add an arbitrary number of irrelevant judges back. In fact, a dictatorship is just the identity function\footnote{This observation allows us to think about Arrow's Theorem in another way, which is ``if we have at least $3$ alternatives, then the only Arrovian SPF $f$ where all the voters are relevant is the identity function $\{T, F\} \rightarrow \{T, F\}$.''} with any number of irrelevant judges! Thus, we say that a pair of boolean functions $\fd\colon \{T, F\}^m \rightarrow \{T,F\}$ and $\fr\colon \{T, F\}^n \rightarrow \{T, F\}$ form a \emph{normal pair} $(\fd, \fr)$ if:
\begin{itemize}
\item For all sets of $M_{ij} \in \{T, F\}$, we have the equation
\[
\fr(\fd(M_{11}, \ldots, M_{m1}), \ldots, \fd(M_{1n}, \ldots, M_{mn})) = \fd(\fr(M_{11}, \ldots, M_{1n}), \ldots, \fr(M_{m1}, \ldots, M_{mn})).
\]
\item Every index $i \in [m]$ is relevant for $\fd$.
\item Every index $j \in [n]$ is relevant for $\fr$.
\item Neither $\fd$ nor $\fr$ is the constant function (this restriction does not rule out anything interesting for our purposes. We want $\fr$ to be UP, so it cannot be constant. We want $\fd$ to be a compound proposition, which also cannot be constant if all its symbols are relevant).
\end{itemize}

Thus, studying compound propositions really come down to studying normal pairs. To be explicit, when we have an Arrovian JAR $f$ on a (not necessarily symbol-complete or symbol-connected!) basis $\halfagenda$, each symbol-closed compound proposition $\fd \in \halfagenda$ gives a $\closure(\fd) \subset \halfagenda$, on which we can assume $f$ is in normal form. We then know $(\fd, \fr)$ is a normal pair where $\fr$ is UP. Our main question is, then, the following slight generalization (removing $\fr$'s UP requirement):

\begin{tcolorbox}
What normal pairs $(\fd, \fr)$ exist?
\end{tcolorbox}

\section{The Main Theorem}
\label{sec:main}

We now classify normal pairs $(\fd, \fr)$ with the aid of some elementary Fourier analysis\footnote{We have also obtained the relevant results, Propositions~\ref{prop:XOR} and \ref{prop:AND-OR}, with purely combinatorial proofs, but we decided to showcase the Fourier analysis proofs because our emphasis is on the techniques being new.} on Boolean functions. In this section, we think of $T$ as $1$ and $F$ as $-1$, so we consider $\fd$ and $\fr$ as functions $\fd\colon \{1, -1\}^m \rightarrow \{1, -1\}$ and $\fr\colon \{1,-1\}^n \rightarrow \{1,-1\}$ respectively. 

As before, we encode the normal pair as a $(m+1) \times (n+1)$ matrix where each cell in the last row corresponds to $\fd$ applied to the cells above it and each cell in the last column corresponds to $\fr$ applied to the cells to the left of it. Being a normal pair, the last row and the last column are consistent with $\fr$ and $\fd$ respectively. 

\[
\begin{matrix}
M_{11} & M_{12} & \cdots & M_{1n} & \rightarrow & x_1 \\
M_{21} & M_{22} & \cdots & M_{2n}  & \rightarrow & x_2 \\
\vdots & \vdots & \ddots & \vdots & \rightarrow & \vdots \\
M_{m1} & M_{m2} & \cdots & M_{mn}  & \rightarrow & x_m \\
\downarrow & \downarrow&  & \downarrow & &  \downarrow \\
y_1 & y_2 & \cdots & y_n & \rightarrow & z.
\end{matrix}
\]

\begin{lem}
\label{lem:powerful-sauce}
Suppose $(\fd, \fr)$ form a normal pair. Then consider any subset $U \subset [m] \times [n]$. For each $r \in [m]$, let $Y_r = \{s \in [n] \suchthat (r,s) \in U\}$, and for each $s \in [n],$ let $X_s = \{r \in [m] \suchthat (r,s) \in U\}$. Now, define
\[
R_U = \{r \in [m] \suchthat \text{for some } y, (r, y) \in U\}, S_U = \{s \in [n] \suchthat \text{for some } x, (x, s) \in U\}.
\]

Then we have 
\[
(\sum_{S' \supset S_U} \h{\fr}(S')\h{\fd}(\emptyset)^{|S'-S_U|}) (\prod_{s \in S_U} \h{\fd}(X_s)) = (\sum_{R' \supset R_U} \h{\fd}(R')\h{\fr}(\emptyset)^{|R'-R_U|})(\prod_{r \in R_U} \h{\fr}(Y_r)).
\]
\end{lem}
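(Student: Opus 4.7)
The plan is to treat the normal-pair identity as a polynomial identity and match coefficients. Each side is a multilinear polynomial in the variables $M_{rs}$ after using $M_{rs}^2 = 1$; since the identity holds for all $\{1,-1\}$-valued inputs, the two sides agree as multilinear polynomials. So it suffices to compare the coefficient of the monomial $M_U := \prod_{(r,s) \in U} M_{rs}$ on both sides for each $U \subset [m] \times [n]$.

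First, I would expand the left-hand side by applying the Fourier decomposition of $\fr$ on the outside and the Fourier decomposition of $\fd$ to each of its arguments:
\[
\fr(\fd(M_{11},\ldots,M_{m1}),\ldots,\fd(M_{1n},\ldots,M_{mn})) = \sum_{S \subset [n]} \h{\fr}(S) \prod_{s \in S} \sum_{R_s \subset [m]} \h{\fd}(R_s) \prod_{r \in R_s} M_{rs}.
\]
A contribution to the monomial $M_U$ requires choosing, for each $s \in S$, a subset $R_s \subset [m]$ so that the disjoint union of the sets $\{(r,s) : r \in R_s\}$ over $s \in S$ is exactly $U$. This forces $S \supset S_U$; moreover $R_s = X_s$ for each $s \in S_U$ and $R_s = \emptyset$ for each $s \in S \setminus S_U$. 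The associated product of Fourier coefficients is
\[
\h{\fr}(S) \cdot \prod_{s \in S_U} \h{\fd}(X_s) \cdot \h{\fd}(\emptyset)^{|S \setminus S_U|},
\]
and summing over all $S \supset S_U$ and factoring out the $\prod_{s \in S_U} \h{\fd}(X_s)$ gives precisely the left-hand side of the lemma's equation.

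Second, an entirely symmetric calculation starting from $\fd(\fr(M_{11},\ldots,M_{1n}),\ldots,\fr(M_{m1},\ldots,M_{mn}))$ and extracting the coefficient of $M_U$ yields the right-hand side of the lemma's equation. Conceptually there is no real obstacle; the main point requiring care is the bookkeeping of which $S$ (respectively $R$) contribute. In particular, $S$ is allowed to strictly contain $S_U$ because any ``extra'' column $s \in S \setminus S_U$ can contribute a factor of $\h{\fd}(\emptyset)$ by taking $R_s = \emptyset$, and this is what produces the powers of $\h{\fd}(\emptyset)$ in the formula (and symmetrically the powers of $\h{\fr}(\emptyset)$ on the other side). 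Once the two expansions are organized this way, equality of their $M_U$-coefficients is immediate from the normal-pair hypothesis.
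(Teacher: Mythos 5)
Your proposal is correct and follows essentially the same route as the paper: expand both sides of the normal-pair identity via the Fourier decompositions of $\fr$ and $\fd$, and equate the coefficients of the multilinear monomial $M_U$, with the powers of $\h{\fd}(\emptyset)$ and $\h{\fr}(\emptyset)$ arising from the ``extra'' columns $S \setminus S_U$ and rows $R \setminus R_U$. The bookkeeping of which $S \supset S_U$ contribute and the factoring out of $\prod_{s \in S_U}\h{\fd}(X_s)$ match the paper's argument exactly.
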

\begin{proof}
For a subset $U \subset [m] \times [n]$, we fill the upper-left $[m] \times [n]$ submatrix of $M$ with $1$'s and $(-1)$'s such that we place a $1$ at $M_{ij}$ if and only if  $(i, j) \in U$. For a normal pair, we need:
\[
\fr(\fd(M_{11}, \ldots, M_{m1}), \ldots, \fd(M_{1n}, \ldots, M_{mn})) = \fd(\fr(M_{11}, \ldots, M_{1n}), \ldots, \fr(M_{m1}, \ldots, M_{mn})).
\]
Expanding the left-hand side, we get
\begin{align*}
& \fr(\sum_{R \subset [m]} \h{\fd}(R)\prod_{r \in R}M_{r1}, \ldots, \sum_{R \subset [m]} \h{\fd}(R)\prod_{r \in R}M_{rn}) \\
= & \sum_{S \subset [n]} \h{\fr}(S) \prod_{s \in S} (\sum_{R} \h{\fd}(R) \prod_{r \in R} M_{rs}).
\end{align*}
Consider the coefficient of the term $M_U = \prod_{(i,j) \in U} M_{ij}$ in this expression. In the outermost sum, we must select $S$ such that $S \supset S_U$, otherwise it would be impossible to get all the terms. After a $S \supset S_U$ is selected, for each $s \in S$ in the product, we must pick $R = \emptyset$ for $s \notin S_U$ and $R = X_s$ for $s \in S_U$. Each $\emptyset$ contributes $\h{\fd}(\emptyset)$ and each $X_s$ contributes $\h{\fd}(X_s)$. As the same $X_s$ for each choice of $S$, they can be factored out, and we obtain the left-hand side of our expression. Looking for the coefficient of $M_U$ in the right-hand side of the original equation gives a similar expression. As the $M_U$ form a basis of functions $\{1, -1\}^{[m] \times [n]} \rightarrow \{1, -1\}$, the two coefficients must equal for all $U$, which gives our result.
\end{proof}


Picking an $R \subset [m]$ and $S \subset[n]$ and letting $U$ be the rectangle $R \times S$ in Lemma~\ref{lem:powerful-sauce}, we immediately obtain:
\begin{cor}
\label{cor:powerful}
Suppose $(\fd, \fr)$ form a normal pair. Then for all nonempty $R \subset [m]$ and $S \subset [n]$,
\[
(\sum_{S' \supset S} \h{\fr}(S')\h{\fd}(\emptyset)^{|S'-S|})\h{\fd}(R)^{|S|}  = (\sum_{R' \supset R} \h{\fd}(R')\h{\fr}(\emptyset)^{|R'-R|})\h{\fr}(S)^{|R|} .
\]
\end{cor}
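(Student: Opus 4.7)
The plan is to apply Lemma~\ref{lem:powerful-sauce} directly, specializing to the rectangular choice $U = R \times S$ and simplifying the resulting quantities. Since the Lemma already handles the hard combinatorial work of tracking the coefficient of $M_U$ in both sides of the normal-pair equation, all that remains for the Corollary is to compute how the auxiliary sets $R_U$, $S_U$, $X_s$, and $Y_r$ degenerate in this special case.

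Concretely, I would first unpack the definitions: with $U = R \times S$, the ``column-slice'' set $X_s = \{r \in [m] : (r,s) \in U\}$ equals $R$ whenever $s \in S$ and is empty otherwise; symmetrically the ``row-slice'' $Y_r$ equals $S$ when $r \in R$ and is empty otherwise. The projection sets then collapse to $R_U = R$ and $S_U = S$. Observing that the products in Lemma~\ref{lem:powerful-sauce} range only over indices in $S_U$ and $R_U$ respectively (so the empty-slice indices never contribute), the two products simplify to
\[
\prod_{s \in S_U} \hat{\fd}(X_s) = \prod_{s \in S} \hat{\fd}(R) = \hat{\fd}(R)^{|S|}, \qquad \prod_{r \in R_U} \hat{\fr}(Y_r) = \prod_{r \in R} \hat{\fr}(S) = \hat{\fr}(S)^{|R|}.
\]

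Substituting these simplifications back into the equation supplied by Lemma~\ref{lem:powerful-sauce} yields
\[
\Bigl(\sum_{S' \supset S} \hat{\fr}(S')\,\hat{\fd}(\emptyset)^{|S' - S|}\Bigr)\hat{\fd}(R)^{|S|} = \Bigl(\sum_{R' \supset R} \hat{\fd}(R')\,\hat{\fr}(\emptyset)^{|R' - R|}\Bigr)\hat{\fr}(S)^{|R|},
\]
which is exactly the claimed identity. The hypothesis that $R$ and $S$ are nonempty is inherited from the fact that the Lemma's statement does not require $U$ to be nonempty but the identity is trivial when a product is empty; here we need $|R|, |S| \geq 1$ so that the exponents $|R|$ and $|S|$ are positive and the single-set factors $\hat{\fd}(R)^{|S|}$ and $\hat{\fr}(S)^{|R|}$ are nontrivial. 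There is no real obstacle: the entire proof is a line or two of bookkeeping on top of the already-established Lemma.
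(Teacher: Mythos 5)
Your proof is correct and is exactly the paper's argument: the paper likewise specializes Lemma~\ref{lem:powerful-sauce} to the rectangle $U = R \times S$ and reads off the identity after noting $X_s = R$, $Y_r = S$, $R_U = R$, $S_U = S$. One small correction to your closing remark: the nonemptiness of $R$ and $S$ is needed not because the exponents must be positive, but because $R_U = R$ requires $S \neq \emptyset$ and $S_U = S$ requires $R \neq \emptyset$ (if either is empty, $U$ is empty and the lemma yields a degenerate identity rather than the stated one).
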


Note that if either $m$ or $n$ equals $1$, the problem is almost trivial: one function (the function with a single argument) must be the identity or the negative identity function (to make its only index relevant, the function cannot be constant), and the other function can be any function where every index is relevant. Thus, it suffices to consider only the case where $m > 1$ and $n > 1$. We call these \emph{nontrivial} normal pairs.

\begin{prop}
\label{prop:XOR}
Suppose $(\fd, \fr)$ form a nontrivial normal pair and $\fd$ is the XOR or NXOR function. Then $\fr$ is also the XOR or NXOR function.
\end{prop}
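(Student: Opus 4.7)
The plan is to apply Corollary~\ref{cor:powerful} to $(\fd, \fr)$ and use the extreme sparseness of the Fourier spectrum of XOR/NXOR to force $\fr$ to have only a single nonzero Fourier coefficient. Recall from the list of examples in Section~\ref{sec:prelim} that if $\fd$ is XOR or NXOR on $m$ arguments then $\h{\fd}(R) = 0$ for every $R \neq [m]$, while $\h{\fd}([m]) = \pm 1$; in particular $\h{\fd}(\emptyset) = 0$. Substituting this into Corollary~\ref{cor:powerful} collapses both sides dramatically: for nonempty $R \subset [m]$ and nonempty $S \subset [n]$, the LHS reduces to $\h{\fr}(S) \, \h{\fd}(R)^{|S|}$ (only the $S' = S$ term survives), and the RHS reduces to $\h{\fd}([m]) \, \h{\fr}(\emptyset)^{m - |R|} \, \h{\fr}(S)^{|R|}$ (only the $R' = [m]$ term survives).

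First, I would choose $R$ to be a proper nonempty subset of $[m]$ (possible since $m \geq 2$ by nontriviality). Then $\h{\fd}(R) = 0$, so the LHS vanishes, giving $\h{\fr}(\emptyset)^{m-|R|} \h{\fr}(S)^{|R|} = 0$ for all nonempty $S$, since $\h{\fd}([m]) = \pm 1$. Taking $R$ to be a singleton, this becomes $\h{\fr}(\emptyset)^{m-1} \h{\fr}(S) = 0$. If $\h{\fr}(\emptyset) \neq 0$ then every $\h{\fr}(S)$ with $S \neq \emptyset$ vanishes, forcing $\fr$ to be constant, contradicting the definition of a normal pair. Hence $\h{\fr}(\emptyset) = 0$.

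Next, I would take $R = [m]$, yielding
\[
\h{\fr}(S) \, \h{\fd}([m])^{|S|} \;=\; \h{\fd}([m]) \, \h{\fr}(S)^{m}
\]
for every nonempty $S \subset [n]$. For any $S$ with $\h{\fr}(S) \neq 0$, dividing gives $\h{\fr}(S)^{m-1} = \h{\fd}([m])^{|S|-1} = \pm 1$, so $\h{\fr}(S) = \pm 1$. Combined with Parseval ($\sum_{S} \h{\fr}(S)^2 = 1$), this says $\fr$ has exactly one nonzero Fourier coefficient, and it has absolute value $1$. Since $\h{\fr}(\emptyset) = 0$, that coefficient sits at some nonempty $S_0 \subset [n]$, so $\fr(r_1, \ldots, r_n) = \pm \prod_{s \in S_0} r_s$. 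The relevance condition on $\fr$ (every index is pivotal somewhere) forces $S_0 = [n]$, so $\fr = \pm \, r_1 r_2 \cdots r_n$, i.e. $\fr$ is XOR or NXOR.

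The main obstacle is not computational but organizational: one must pick the right instances $R$ of Corollary~\ref{cor:powerful} in the right order, first ruling out $\h{\fr}(\emptyset) \neq 0$ via a proper subset $R$, then exploiting the full set $R = [m]$ to pin down the shape of $\h{\fr}(S)$. Everything else is bookkeeping with the fact that $\h{\fd}(\emptyset)$ and $\h{\fd}(R)$ (for $R \neq [m]$) are zero, which is what makes the XOR/NXOR case so clean compared to the AND/OR case handled in the next proposition.
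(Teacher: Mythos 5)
Your proof is correct and follows essentially the same route as the paper: apply Corollary~\ref{cor:powerful} with $R=[m]$, use the sparsity of the XOR/NXOR spectrum to get $\h{\fr}(S)=\pm\h{\fr}(S)^m$ and hence $\h{\fr}(S)=\pm1$, invoke Parseval to isolate a single nonzero coefficient, and use relevance to force $S=[n]$. The only cosmetic difference is that you collapse the sum $\sum_{S'\supset S}$ using $\h{\fd}(\emptyset)=0$ (plus a preliminary step killing $\h{\fr}(\emptyset)$ via a proper subset $R$), whereas the paper collapses it by choosing $S$ maximal among sets with nonzero coefficient; both are valid.
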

\begin{proof}
Recall that $\fd$ has $\h{\fd}([m]) = \pm 1$ and all other coefficients equal to $0$. Take a maximal (under set inclusion) $S \subset [n]$ such that $\h{\fr}(S) \neq 0$. This means summing $S' \supset S$ in Corollary~\ref{cor:powerful} only picks up $\h{\fr}(S)$. Corollary~\ref{cor:powerful} applied to $R = [m]$ and $S$ then gives $\h{\fr}(S) = \pm \h{\fr}(S)^{m}$. Since $m > 1$, $\h{\fr}(S) = \pm 1$. Thus, $\h{\fr}(S)$ must be the only nonzero Fourier coefficient in $\h{\fr}$. Because every index is relevant in $\h{\fr}$, we must have $S = [n]$, else there would be a symbol not appearing in the Fourier expansion of $\fr$. This shows that $\fr$ is indeed the XOR function (or its negation).
\end{proof}

We are now ready to justify why we singled out XOR with Proposition~\ref{prop:XOR}:
\begin{prop}
\label{prop:XOR-or-free}
Suppose $(\fd, \fr)$ form a nontrivial normal pair. Then one of these must hold:
\begin{itemize}
\item Both $\fd$ and $\fr$ are the XOR or NXOR functions.
\item Both $\fd$ and $\fr$ are forceful functions.
\end{itemize}
\end{prop}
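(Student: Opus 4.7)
By Proposition~\ref{prop:XOR}, if one of $\fd, \fr$ is XOR or NXOR, so is the other, which puts us in the first case of the statement. So it suffices to assume that neither is XOR/NXOR and show that both are forceful. Since the normal pair identity is symmetric under swapping $(\fd, \fr) \leftrightarrow (\fr, \fd)$ (equivalently, transposing the matrix $M$ from Lemma~\ref{lem:powerful-sauce}), it suffices to show that $\fr$ is forceful.

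Set $a = \h{\fd}([m])$, $b = \h{\fr}([n])$, $\alpha = \h{\fd}(\emptyset)$. Specializing Corollary~\ref{cor:powerful} to $R = [m]$ gives, for each nonempty $S \subset [n]$,
\[
\Big(\sum_{S' \supset S} \h{\fr}(S')\,\alpha^{|S'-S|}\Big) a^{|S|} \;=\; a\,\h{\fr}(S)^m.
\]
Taking $S = [n]$ yields $b\,a^n = a\,b^m$, hence $a^{n-1} = b^{m-1}$; a short argument rules out $a = b = 0$, since that would collapse the Fourier support of $\fr$ back to the XOR/NXOR case excluded above. Combining this specialization with the analogous one at $S = [n]$ (which gives $\sum_{R' \supset R} \h{\fd}(R')\,\h{\fr}(\emptyset)^{|R'-R|} = \h{\fd}(R)^n\,b^{1-|R|}$) yields the master relation
\[
\h{\fr}(S)^{m-|R|}\, b^{|R|-1} \;=\; \h{\fd}(R)^{n-|S|}\, a^{|S|-1}
\]
whenever both $\h{\fr}(S),\h{\fd}(R)$ are nonzero and $R,S$ are nonempty.

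Since $\fr$ is not XOR/NXOR, a direct check on $\pm 1$-valued functions shows there must exist a nonempty proper $S^* \subsetneq [n]$ with $\h{\fr}(S^*) \neq 0$; choose $S^*$ maximal under inclusion. By maximality the sum in the specialized identity collapses to the two terms $S' = S^*$ and $S' = [n]$, yielding the explicit equation
\[
(\h{\fr}(S^*) + b\,\alpha^{n - |S^*|})\,a^{|S^*| - 1} \;=\; \h{\fr}(S^*)^m.
\]
Iterating this maximality argument through smaller subset sizes, combined with the master relation and Parseval's identity $\sum_S \h{\fr}(S)^2 = 1$, should force $|\h{\fr}(S)| = 1/2^{n-1}$ for every nonempty $S$ and $|\h{\fr}(\emptyset)| = 1 - 1/2^{n-1}$. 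The sign pattern must then factor as $\h{\fr}(S) = c_0 \prod_{s \in S} c_s / 2^{n-1}$ for some $c_0, c_1, \ldots, c_n \in \{-1, 1\}$ by consistency of the master relation across different $(R,S)$, placing $\fr$ in the form of Lemma~\ref{lem:forceful} and establishing that $\fr$ is forceful.

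The hardest step will be propagating the magnitude constraint from the maximal-$S^*$ equation to all nonzero Fourier coefficients of $\fr$: the master relation only ties $|\h{\fr}(S)|$ to $|\h{\fd}(R)|$ when both are nonzero, so a sparse Fourier support for $\fd$ could in principle block the direct induction. I anticipate a case split depending on whether $\alpha = 0$ (in which case the $S^*$ equation simplifies to the clean power law $\h{\fr}(S^*)^{m-1} = a^{|S^*|-1}$) versus $\alpha \neq 0$ (where the $b\,\alpha^{n-|S^*|}$ correction must be handled algebraically), with Parseval and the $\pm 1$-valued constraint on $\fr$ used to rule out residual non-forceful configurations at each stage of the iteration.
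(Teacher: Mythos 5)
Your reduction to showing that $\fr$ is forceful (given that neither function is XOR/NXOR) is sound, and the $R=[m]$ specialization of Corollary~\ref{cor:powerful} is computed correctly. But the argument has two genuine gaps. First, the entire ``master relation'' is obtained by dividing through by $a = \h{\fd}([m])$ and $b = \h{\fr}([n])$, and your justification for why these are nonzero is backwards: a vanishing top Fourier coefficient does not ``collapse $\fr$ to the XOR/NXOR case'' --- XOR and NXOR are precisely the functions with $|\h{f}([n])| = 1$, so $\h{f}([n]) = 0$ puts a function as far from XOR as possible. Moreover, there really are Boolean functions with every index relevant and vanishing top coefficient, e.g.\ the selector $f(x_1,x_2,x_3) = \tfrac{1}{2}(x_1+x_2) + \tfrac{1}{2}(x_1x_3 - x_2x_3)$ (which outputs $x_1$ when $x_3 = 1$ and $x_2$ otherwise): it is non-forceful and non-XOR, hence exactly the kind of function the proposition must exclude, yet with $a = 0$ the $R = [m]$ instance of Corollary~\ref{cor:powerful} degenerates to $0 = 0$ and your machinery says nothing. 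Second, the step that carries all the content --- propagating $|\h{\fr}(S)| = 1/2^{n-1}$ from the single maximal $S^*$ to \emph{every} nonempty $S$, and then extracting the multiplicative sign pattern required by Lemma~\ref{lem:forceful} --- is only asserted (``should force,'' ``I anticipate a case split''). You correctly diagnose why it may fail (the master relation links $\h{\fr}(S)$ to $\h{\fd}(R)$ only when both are nonzero, so sparse support of $\fd$ can block the induction), but you do not resolve it, so the proof is incomplete precisely where it matters.

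For what it is worth, the paper proves this proposition with no Fourier analysis at all: if some index $n$ is free for $\fr$ and $\fd$ is not XOR/NXOR, then $\fd$ fails to be $m$-pivotal at some point yet is $m$-pivotal at another (by relevance), and one can fill in an explicit matrix --- using freeness of index $n$ to steer the first $m-1$ row-aggregates wherever needed --- so that flipping the single entry $M_{mn}$ leaves the column-then-row evaluation unchanged but flips the row-then-column evaluation, contradicting the normal pair identity. That argument sidesteps both of the difficulties above (it never needs any Fourier coefficient to be nonzero), so if you want to salvage a Fourier-analytic proof you will need, at minimum, a separate treatment of the case $\h{\fd}([m]) = 0$ or $\h{\fr}([n]) = 0$ and a completed induction for the magnitude propagation.
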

\begin{proof}
To start, we assume neither $\fd$ or $\fr$ are the XOR or NXOR functions (which, as we know from Proposition~\ref{prop:XOR}, would force the other function to also be of this form). We now show a contradiction arises if either of the functions (without loss of generality, $\fr$) is not forceful. This means we can assume that some index (without loss of generality, $n$), is free for $\fr$.

We claim that for some $i$ and $x \in \{1,-1\}^m$, $\fd$ is \textbf{not} $i$-pivotal at $x$. Otherwise, $\fd$ is $i$-pivotal at every point for every index; it is easy to see by induction that only XOR and NXOR have this property, which we ruled out. Without loss of generality, we can assume $i = m$. Because all the indices are relevant for $\fd$, there must also be at least one $x \in \{1, -1\}^m$ such that $\fd$ is $m$-pivotal at $x$. Thus, we can conclude that $\fd$ is not $m$-pivotal at some $(s_1, \ldots, s_{m-2}, s_{m-1}, *)$ but $\fd$ is $m$-pivotal at some $(s_1', \ldots, s_{m-2}', s_{m-1}', *)$. We now define some more values:
\begin{enumerate}
\item Since every index is relevant for $\fr$, there is some $z_1, \ldots, z_{n-1}$ such that $\fr$ is $n$-pivotal at $(z_1, \ldots, z_{n-1}, *)$. 
\item For each $1 \leq i \leq (m-1)$, because index $n$ is free for $\fr$, there exist $(x_{i,1}, \ldots, x_{i, n-1})$ such that $\fr(x_{i,1}, \ldots, x_{i, n-1}, s_i) = s_i'$.
\item Let $s_m$ be arbitrary and $s_m' = \fr(z_1, z_2, \ldots, z_{n-1}, s_m)$.
\end{enumerate}
We construct the following evaluation of $(\fd, \fr)$:
\[
\begin{matrix}
x_{1,1} & x_{1,2} & \cdots & x_{1,n-1} & s_1 & \rightarrow & s_1' \\
x_{2,1} & x_{2,2} & \cdots & x_{2,n-1} & s_2 & \rightarrow & s_2' \\
\vdots & \vdots & \ddots & \vdots & \vdots & \rightarrow & \vdots \\
x_{m-2,1} & x_{m-2,2} & \cdots & x_{m-2,n-1} & s_{m-2} & \rightarrow & s_{m-2}' \\
x_{m-1,1} & x_{m-1,2} & \cdots & x_{m-1,n-1} & s_{m-1} & \rightarrow & s_{m-1}' \\
z_1 & z_2 & \cdots & z_{n-1} & s_m & \rightarrow & s_m' \\
\downarrow & \downarrow & \downarrow & \downarrow & \downarrow & & \downarrow \\
j_1 & j_2 & \cdots & j_{n-1} & j_n & \rightarrow & j.
\end{matrix}
\]
Here, the rows are correct by our choices of $x_{*,*}$ and $z_*$ and we evaluate the $j_*$ by evaluating $\fd$. 

Now, change $s_m$ to $\negation{s_m}$ in the $(m,n)$-entry of the matrix. Because $\fd$ is not $m$-pivotal at $(s_1, \ldots, s_{m-1}, *)$, $j_n$ does not change. As the first $(n-1)$ columns were unaffected, the other $j_i$ do not change either, so $j$ remains constant. However, because $\fr$ is $n$-pivotal at $(z_1, \ldots, z_{n-1}, *)$, the $(m, n+1)$-entry changes from $s_m'$ to $\negation{s_m'}$. Because $\fd$ is $m$-pivotal at $(s_1',  \ldots, s_{m-1}', *)$, the last column forces $j$ to change to $\negation{j}$. This gives a contradiction.
\end{proof}

\begin{prop}
\label{prop:AND-OR}
Suppose $(\fd, \fr)$ form a nontrivial normal pair where $\fd$ and $\fr$ are forceful functions. Then $\fd$ can only be the AND function (resp. the OR function); $\fr$ must also be the AND function (resp. the OR function).
\end{prop}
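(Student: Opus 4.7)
The plan is to exploit the explicit Fourier formulas for forceful functions from Lemma~\ref{lem:forceful} and apply Corollary~\ref{cor:powerful} with three carefully chosen pairs $(R, S)$ to pin down the parameters of $\fd$ and $\fr$. By Lemma~\ref{lem:forceful} I write $\fd$ with parameters $(d_0, d_1, \ldots, d_m) \in \{-1,1\}^{m+1}$ and $\fr$ with $(e_0, e_1, \ldots, e_n) \in \{-1,1\}^{n+1}$, and set $\alpha = \h{\fd}(\emptyset) = d_0(2^{1-m}-1)$ and $\beta = \h{\fr}(\emptyset) = e_0(2^{1-n}-1)$. Since $m, n \geq 2$, both $|\alpha|, |\beta| \in (0, 1)$.

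First apply Corollary~\ref{cor:powerful} with $R = \{r\}$ and $S = \{s\}$. Expanding both sides and cancelling the common nonzero factor $\h{\fd}(\{r\})\h{\fr}(\{s\})$ yields $\prod_{i \neq s}(1+e_i\alpha) = \prod_{j \neq r}(1+d_j\beta)$. The left side is independent of $r$ and the right factors as $\prod_j(1+d_j\beta)/(1+d_r\beta)$; since $|\beta|<1$, the two possible values $1\pm\beta$ are distinct and nonzero, so $d_r$ cannot depend on $r$. Writing $d := d_1 = \cdots = d_m$ and symmetrically $e := e_1 = \cdots = e_n$, the identity collapses to $(1+e\alpha)^{n-1} = (1+d\beta)^{m-1}$.

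I next perform a sign analysis: $1+e\alpha$ equals $2^{1-m} < 1$ when $ed_0 = 1$ (``small'') and $2 - 2^{1-m} > 1$ when $ed_0 = -1$ (``large''), and similarly for $1+d\beta$. The two mixed cases are immediately impossible since powers of small quantities stay below $1$ while powers of large ones stay above. The ``both small'' case $e = d_0$, $d = e_0$ satisfies the equation trivially. To rule out ``both large'' I apply Corollary~\ref{cor:powerful} again with $R = \{r\}$ and $S = [n]$; after simplification this becomes $(d_0 d)^{n-1} = 2^{(m-1)(n-1)}(1+d\beta)^{m-1}$, whose right side in the ``large'' regime is at least $2^{(m-1)(n-1)} \geq 2$ while the left side is $\pm 1$, a contradiction.

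So $e = d_0$ and $d = e_0$. Finally I apply Corollary~\ref{cor:powerful} with two-element sets $R \subset [m]$ and $S \subset [n]$ (valid since $m, n \geq 2$). Using $d^2 = e^2 = 1$, the relevant Fourier coefficients simplify to $\h{\fd}(R) = d_0/2^{m-1}$ and $\h{\fr}(S) = e_0/2^{n-1}$; substituting and telescoping the sums with $1+e\alpha = 2^{1-m}$ and $1+d\beta = 2^{1-n}$ collapses both sides to $e_0/2^{mn-1}$ and $d_0/2^{mn-1}$ respectively, forcing $e_0 = d_0$. Combined with $e = d_0$ and $d = e_0$, we obtain a common value $d_0 = d = e_0 = e =: c \in \{-1, 1\}$, and the formulas after Lemma~\ref{lem:forceful} show $c = 1$ gives $\fd = \fr = \mathrm{AND}$ and $c = -1$ gives $\fd = \fr = \mathrm{OR}$. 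The most delicate step is this last one: the singleton constraints do not separate, for instance, $(\mathrm{NOR}_m, \mathrm{NAND}_n)$ in odd dimensions from the intended $(\mathrm{AND}, \mathrm{AND})$, so the larger $|R|, |S| = 2$ application is genuinely needed to force $e_0 = d_0$.
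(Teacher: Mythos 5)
Your proof is correct, and it follows the same overall strategy as the paper --- write $\fd$ and $\fr$ in the explicit product form of Lemma~\ref{lem:forceful} and feed the resulting Fourier coefficients into Corollary~\ref{cor:powerful} for well-chosen pairs $(R,S)$ --- but the tactical execution is genuinely different. The paper applies the corollary with $R=[m]$ and $R=[m]\setminus\{r\}$ against a common nonempty $S$, divides the two identities, and closes with a divisibility argument: the resulting relation $(2^{n-1})(c_r-c_0)=d_0d_Sc_r^{|S|}-c_0$ has left side divisible by $4$ and right side in $\{-2,0,2\}$, forcing $c_r=c_0$ for every $r$ in one stroke, which already pins $\fd$ to AND or OR. You instead use singletons to get the clean product identity $\prod_{i\neq s}(1+e_i\alpha)=\prod_{j\neq r}(1+d_j\beta)$ and deduce constancy of the signs by varying $r$ and $s$, then rule out the ``both large'' branch with $(R,S)=(\{r\},[n])$ by a magnitude comparison, and finally use two-element sets to force $e_0=d_0$. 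Your route is longer but has one concrete advantage: the last step explicitly establishes the matching (AND pairs with AND, OR with OR, and in particular excludes pairs like $(\mathrm{NOR}_m,\mathrm{NAND}_n)$ for odd $n$, which survive the singleton constraints), whereas the paper relegates this matching to an unproved ``it is easy to check.'' The paper's divisibility trick is the more economical way to get $c_r=c_0$, but both arguments are sound.
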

\begin{proof}
By Lemma~\ref{lem:forceful}, we can find $c_*$ and $d_*$, all in $\{-1,1\}$, such that
\begin{align*}
\fd(r_1, \ldots, r_m) & = -c_0 + c_0 \frac{1}{2^{m-1}} (1 + c_1r_1) (1 + c_2r_2) \cdots (1 + c_mr_m), \\
\fr(r_1, \ldots, r_n) & = -d_0 + d_0 \frac{1}{2^{n-1}} (1 + d_1r_1) (1 + d_2r_2) \cdots (1 + d_nr_n).
\end{align*}
The Fourier coefficients of $\fd$ are $\h{\fd}(R) = \frac{c_0c_R}{2^{m-1}}$ (where $c_R = \prod_{r \in R}c_r$) for all $R \neq \emptyset$ and $\h{\fd}(\emptyset) = c_0(1/2^{m-1}-1)$; in particular, we know $\h{\fr}(R) \neq 0$ for all $R$. We obtain similar conditions for $\h{\fr}(S)$. Apply Corollary~\ref{cor:powerful} to $R=[m]$ and nonempty $S$ to obtain:
\begin{align*}
(\sum_{S' \supset S} \h{\fr}(S')\h{\fd}(\emptyset)^{|S'-S|})(\frac{c_0c_{[m]}}{2^{m-1}})^{|S|}  & = \h{\fd}([m])\h{\fr}(S)^{m}.
\end{align*}
Pick any row $r \in [m]$ and let $R = [m] \backslash r$. Because $m > 1$, $R$ is nonempty. Corollary~\ref{cor:powerful} gives:
\begin{align*}
(\sum_{S' \supset S} \h{\fr}(S')\h{\fd}(\emptyset)^{|S'-S|})(\frac{c_0c_{[m] \backslash r}}{2^{m-1}})^{|S|}
 & = (\h{\fd}([m]\backslash r) + \h{\fd}([m])\h{\fr}(\emptyset)) \h{\fr}(S)^{m-1}.
\end{align*}
The left-hand sides of the two equations differ (multiplicatively) by $c_r^{|S|}$. Dividing the right-hand sides (using the fact that $\h{\fr}(S)$ is nonzero), we obtain:
\begin{align*}
\h{\fd}([m]\backslash r) + \h{\fd}([m])\h{\fr}(\emptyset) & = \h{\fr}(S) \h{\fd}([m]) c_r^{|S|}, \\
\frac{\h{\fd}([m]\backslash r)}{\h{\fd}([m]) } + \h{\fr}(\emptyset) & = \h{\fr}(S)c_r^{|S|}, \\
c_r + c_0(\frac{1}{2^{n-1}} - 1) & = \frac{d_0d_{S}c_r^{|S|}}{2^{n-1}}, \\ 
(2^{n-1})(c_r - c_0) & = d_0d_{S}c_r^{|S|} - c_0.
\end{align*}
Since $n > 1$ and $c_0$ has the same parity as $c_r$, the left-hand side is divisible by $4$. The right-hand side can only take values in $\{-2, 0, 2\}$, so we must have $c_r = c_0$. Since $r$ was arbitrary, we know that all the $c_i$'s equal; this means $\fd$ must be the AND (if all $c_i$ equal $1$) or the OR function (if all $c_i$ equal $-1$). By symmetry, all the $d_i$'s also equal, meaning $\fr$ must also be the AND or OR function. It is easy to check that $\fr$ is AND if and only if $\fd$ is AND.
\end{proof}

Combining our work in this section, we obtain our main result:
\begin{thm}
\label{thm:main}
Suppose $(\fd, \fr)$ form a nontrivial normal pair. Then only the following cases are possible:
\begin{itemize}
\item $\fd$ and $\fr$ are both AND functions.
\item $\fd$ and $\fr$ are both OR functions.
\item each of $\fd$ and $\fr$ are the XOR or NXOR functions.
\end{itemize}
\end{thm}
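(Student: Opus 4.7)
The plan is to deduce the theorem as a direct corollary of the three propositions established immediately before it. The work of classifying normal pairs has already been distributed across Propositions~\ref{prop:XOR}, \ref{prop:XOR-or-free}, and \ref{prop:AND-OR}, so the role of this final theorem is simply to assemble these pieces into a clean trichotomy.

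First, I would invoke Proposition~\ref{prop:XOR-or-free} to split into two exhaustive cases. In the first case, both $\fd$ and $\fr$ are XOR or NXOR functions, which is exactly the third bullet of the theorem, so nothing more needs to be said. In the second case, both $\fd$ and $\fr$ are forceful. Here I would apply Proposition~\ref{prop:AND-OR}, which tells us that a nontrivial normal pair of forceful functions must be an AND/AND pair or an OR/OR pair. This yields the first two bullets of the theorem.

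The only mild obstacle is bookkeeping: verifying that the hypotheses of Proposition~\ref{prop:AND-OR} are met (nontriviality and forcefulness of both functions) is immediate from Proposition~\ref{prop:XOR-or-free}, and the matching conclusion (both AND or both OR, rather than a mixed AND/OR pair) is part of what Proposition~\ref{prop:AND-OR} already supplies. So the proof really is a two-line case split, and the substantive content lives entirely in the Fourier-analytic arguments for the three preceding propositions.
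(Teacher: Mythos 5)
Your proposal is correct and matches the paper exactly: the paper offers no separate argument for Theorem~\ref{thm:main} beyond the remark that it follows by ``combining our work in this section,'' which is precisely the case split via Proposition~\ref{prop:XOR-or-free} followed by Proposition~\ref{prop:AND-OR} that you describe. Nothing is missing.
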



\section{Comparison with Existing Literature}
\label{sec:list}

In this section, we use Theorem~\ref{thm:main} about normal pairs to produce some judgment aggregation results. We compare them to existing theorems in the literature of \cite{list2002aggregating}, \cite{list2007}, and \cite{list2013}. 

\subsection{Arrow's Theorem}

First, recall that normal pairs required all the arguments in each functions to be relevant. We can easily reverse this requirement on judges to obtain the following Corollary, which is a closer analogue to Arrow's Theorem:

\begin{cor}
\label{cor:main}
Suppose $\halfagenda$ is symbol-complete and symbol-connected. Then if $f$ is an Arrovian JAR in normal form for $\halfagenda$, at least one of the following must hold:
\begin{itemize}
\item $|\halfagenda| = 1$, in which case $\fr$ can be any UP function.
\item $f$ is a dictator for a single judge $i$, in which case $\halfagenda$ can have any propositions.
\item $|\halfagenda| > 1$ and $f$ is not a dicatorship. Then there is a subset $S$ of judges with $|S| \geq 2$, such that all compound statements in $\halfagenda$ and $\fr$ are simultaneusly the OR, AND, or XOR (in this case, negations are allowed), where $\fr$ is applied to $S$.
\end{itemize}
\end{cor}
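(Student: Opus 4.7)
The plan is to reduce to the classification of normal pairs given by Theorem~\ref{thm:main}, applied to an arbitrary compound proposition in $\halfagenda$. First, I would dispose of the base case $|\halfagenda| = 1$: by symbol-completeness the unique proposition must be atomic, so PI expresses $f$ via a single function $\fr$ whose only remaining constraint is UP, and any UP $\fr$ indeed yields an Arrovian JAR. For $|\halfagenda| > 1$, symbol-connectedness forces the existence of at least one compound proposition, because any two distinct atomic propositions are symbol-disjoint and would form isolated vertices in $G_{\halfagenda}$.

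Next, pick any compound $\fd \in \halfagenda$. Symbol-completeness gives $\closure(\fd) \subset \halfagenda$, and since $f$ is in normal form the judgment-matrix diagram at the end of Section~\ref{sec:setup} produces the commutation equation between $\fd$ and the single function $\fr$. To obtain a genuine normal pair I restrict $\fr$ to the set $S \subset [n]$ of its relevant judges, yielding $\fr|_S$. A routine check shows the commutation equation for $(\fd, \fr)$ implies the one for $(\fd, \fr|_S)$ (since irrelevant coordinates of $\fr$ affect neither side), that UP of $\fr$ descends to UP (hence non-constantness) of $\fr|_S$, and that the standing assumption that every symbol of $\fd$ is relevant verifies the remaining clauses of the normal pair definition.

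Now I split on $|S|$. The case $|S| = 0$ is impossible, since $\fr$ would be constant and violate UP. If $|S| = 1$ then $\fr|_S\colon \{T,F\} \to \{T,F\}$ is non-constant and UP, hence the identity, and $f$ is the dictator on the unique judge of $S$. If $|S| \geq 2$, then $(\fd, \fr|_S)$ is a \emph{nontrivial} normal pair and Theorem~\ref{thm:main} forces $\fr|_S$ and $\fd$ to be simultaneously AND, simultaneously OR, or simultaneously XOR/NXOR. With the type of $\fr|_S$ now pinned down, I would repeat the same reduction for every other compound $\fd' \in \halfagenda$, each producing a nontrivial normal pair $(\fd', \fr|_S)$ with the same second coordinate; Theorem~\ref{thm:main} then forces $\fd'$ to share the type of $\fr|_S$. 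Thus every compound in $\halfagenda$ has this common type, and the ``negations allowed'' clause in the statement captures exactly the XOR/NXOR ambiguity (AND and OR admit no analogous freedom, since NAND and NOR are not among the conclusions of Theorem~\ref{thm:main}).

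The main obstacle I anticipate is the bookkeeping around the reduction to a normal pair: cleanly checking that the commutation equation descends from $(\fd, \fr)$ to $(\fd, \fr|_S)$ when irrelevant judges are trimmed, and that the $|S| = 1$ case really pinpoints the identity rather than negation under UP. Once these are in place, Theorem~\ref{thm:main} does essentially all of the remaining work, and the propagation of the common type across all compounds follows by repeating the reduction with the second coordinate fixed.
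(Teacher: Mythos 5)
Your proposal is correct and follows essentially the same route as the paper: reduce to a normal pair by restricting $\fr$ to its relevant judges, split on the size of that set, invoke Theorem~\ref{thm:main} in the nontrivial case, and propagate the common function type to every compound because the second coordinate of each resulting normal pair is the same. Your write-up is somewhat more careful than the paper's (explicitly using symbol-connectedness to rule out multiple atomic-only propositions, and checking the $|S|=1$ case pins down the identity), but no new ideas are involved.
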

\begin{proof}
If there are no compound propositions in $\halfagenda$, then we can only be in the first case. Otherwise, having a compound proposition $\fd$ and restricting to relevant judges gives a normal pair $(\fd, \fr')$ where $\fr'$ is $\fr$ restricted on the relevant judges. We apply Theorem~\ref{thm:main} on $(\fd, \fr')$, remembering the possibility of irrelevant judges when we go back to $\fr$, and the fact that $\fr$ must be UP. Note that if $|\halfagenda| > 1$ and $f$ is not a dictatorship, we cannot have more than one compound proposition type because the OR, AND, and XOR functions are different with $2$ or more arguments.
\end{proof}

We now refer the reader to the two closest related theorems in the literature of Dietrich and List, paraphrased for ease of exposition. 

\begin{thm}[Theorem 2, \cite{list2007}]
\label{thm:list-arrow}
Suppose the agenda $\agenda$ is strongly connected. Then a JAR is Arrovian if and only if it is a dictatorship.
\end{thm}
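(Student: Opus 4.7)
The strategy is the classical \emph{ultrafilter method} pioneered by Kirman--Sondermann for Arrow's theorem and adapted to judgment aggregation by Dietrich and List. First, I would use PI to define, for each proposition $p \in \agenda$, the family $\mathcal{W}_p \subseteq 2^{[n]}$ of \emph{winning coalitions} for $p$: those $S \subseteq [n]$ such that whenever exactly the judges in $S$ accept $p$, the aggregate accepts $p$. PI guarantees $\mathcal{W}_p$ is well-defined, since the aggregate value on $p$ depends only on the profile restricted to $p$.

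The heart of the proof is the \emph{coalition-transfer} step: strong connectedness (roughly, that any two propositions in $\agenda$ are linked by a chain of minimal inconsistent sets of size at least three) implies $\mathcal{W}_p = \mathcal{W}_q$ for all $p, q \in \agenda$. Concretely, given a minimal inconsistent $\{p, q, r\} \subseteq \agenda$, one constructs profiles in which, by varying the $p$- and $q$-columns and exploiting the consistency constraint, any $S$ winning for $p$ is forced to also be winning for $q$. This local forcing is conceptually similar in spirit to Lemma~\ref{lem:PI-doctrinal-strong} in the paper, where a comparable argument equated $f_x$ with $f_y$ or $\flip(f_x)$. Chaining these local equalities along the connectivity path yields a single family $\mathcal{W}$ common to all propositions.

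Next, I would show $\mathcal{W}$ is an ultrafilter on $[n]$: the whole set $[n] \in \mathcal{W}$ by UP; upward closure follows from PI together with UP applied along profiles that enlarge coalitions; closure under intersection is obtained by picking a minimal inconsistent triple $\{p, q, r\}$ and using a transfer relation of the form $\mathcal{W}_p \cap \mathcal{W}_q \subseteq \mathcal{W}_{\negation{r}}$ combined with the uniformity $\mathcal{W}_{\negation{r}} = \mathcal{W}$; and the complement property $S \in \mathcal{W} \iff [n] \setminus S \notin \mathcal{W}$ follows from UP applied to both $p$ and $\negation{p}$. Because $[n]$ is finite, every ultrafilter on $[n]$ is principal, so $\mathcal{W} = \{S : i^* \in S\}$ for a unique $i^* \in [n]$, which is the dictator; conversely, any dictatorship is trivially Arrovian.

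The main obstacle is the coalition-transfer step. Strong connectedness is a global structural condition on $\agenda$, yet the argument proceeds locally through individual minimal inconsistent sets; the subtlety is showing each local step actually transfers the full family $\mathcal{W}_p$ rather than a weaker monotone fragment. I would also expect some care in excluding ``degenerate'' minimal inconsistent pairs (size-two sets, which are just $\{x, \negation{x}\}$ and carry no transfer information), which is precisely why strong connectedness is traditionally formulated through size-at-least-three sets.
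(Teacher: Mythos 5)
This theorem is not proved in the paper at all: it is Theorem 2 of Dietrich and List's 2007 paper, imported as a point of comparison, and the author explicitly declines even to define \emph{strongly connected} (``we omit it here as we will not need it later''). So there is no in-paper proof to measure your proposal against; the only fair comparison is with the original proof in the cited literature. Against that benchmark, your sketch is the standard and essentially correct route: the winning-coalition/ultrafilter method. Defining $\mathcal{W}_p$ via PI, transferring coalitions along conditional entailments guaranteed by the connectedness hypothesis to get a single family $\mathcal{W}$, verifying the ultrafilter axioms, and invoking finiteness of $[n]$ to extract a principal ultrafilter (the dictator) is exactly how this family of results is established. Two small cautions: the complement property $S \in \mathcal{W} \iff [n]\setminus S \notin \mathcal{W}$ comes from completeness and consistency of the \emph{collective} judgment set (exactly one of $p$, $\negation{p}$ is collectively accepted) rather than from UP; and the transfer step in Dietrich--List runs through the conditional-entailment relation $\vdash^*$ and total blockedness rather than literally through minimal inconsistent triples, though the two formulations are close. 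Neither affects the soundness of the outline. Note also that your approach is genuinely different in flavor from the machinery this paper actually develops (normal pairs and Fourier analysis on the Hamming cube); the paper's own Corollary~\ref{cor:main} recovers a dictatorship conclusion only under the stronger symbol-complete/symbol-connected hypotheses, and the author states that the strongly-connected result is not rederived here.
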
 

The notion of \emph{strongly connected} is fairly strong; we omit it here as we will not need it later. Compared to Theorem~\ref{thm:list-arrow}, our work requires fewer assumptions and thus finds more JAR's. 

To understand the next result, we give a few definitions from \cite{list2013}:
\begin{itemize}
\item An agenda $\agenda$ is \emph{even-number negatable} if there is a minimal inconsistent set $Y \subset \agenda$ with a subset $Z \subset Y$ of even size such that $(Y \backslash Z) \cup \{\negation{p}\colon p \in Z\}$ is consistent.
\item We say that $p \vdash^* q$ if $\{p\} \cup Y$ implies $q$ for some $Y \subset \agenda$ consistent with $p$ and with $\negation{q}$. We say that $p \vdash\vdash^* q$ if there is a sequence of propositions $a_1=p, a_2, \ldots, a_k=q \in \agenda$ such that each $p_i \vdash^* p_{i+1}$. Finally, we say that $\agenda$ is \emph{semi-blocked} if for all propositions $p, q \in \agenda$, [$p \vdash\vdash^* q$ and $q \vdash\vdash^* p$] or [$p \vdash\vdash^* \negation{q}$ and $\negation{q} \vdash\vdash^* p$].
\end{itemize}

\begin{thm}[Theorem 2, \cite{list2013}]
\label{thm:list-arrow-general}
Suppose the agenda $\agenda$ is semi-blocked and even-number negatable. Then if $f$ is an Arrovian JAR, it must be an \emph{oligarchy}, where there is a non-empty set $S$ of the judges and a subset $D \subset \agenda$ such that for all profiles $(p_1, \ldots, p_n)$, the aggregated judgement is AND applied to $S$ for propositions in $\agenda \backslash D$ and OR applied to $S$ for propositions in $D$.
\end{thm}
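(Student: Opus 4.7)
The plan is to mirror the architecture of Corollary~\ref{cor:main}, with the semi-blocked hypothesis replacing symbol-connectedness and even-number negatability playing the role of a global obstruction to XOR-type aggregation. The target is: after a sign change on a subset of the basis, the aggregator $f$ is in normal form with a unified function $\fr\colon\{T,F\}^n\to\{T,F\}$ that must be AND or OR on a nonempty subset $S$ of the judges; translating the sign change back recovers the (AND on $\agenda\setminus D$, OR on $D$) oligarchy.

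The first step is to generalize Lemma~\ref{lem:PI-doctrinal-strong} from the ``$y$ is determined by $\halfagenda\setminus y$'' setting to the $\vdash^*$ setting: I claim that if $p\vdash^* q$, then $f_p=f_q$ or $f_p=\flip(f_q)$. The proof mimics the original: use the witnessing set $Y$ (consistent with $p$ and with $\negation q$) as a background, so that UP pins every proposition in $Y$ to its value in a fixed pair of witness judgments $\pi,\pi'$ that differ only on the truth values of $p$ and $q$; then, as in the original lemma, the aggregator's output is forced onto one of two branches keyed by the $(p,q)$ columns, which yields $f_p=f_q$ or $f_p=\flip(f_q)$. The semi-blocked chains $p\vdash\vdash^* q$ and $q\vdash\vdash^* p$ (or the analogue with $\negation q$) then let me iterate the flip relation between any two propositions of $\agenda$; since $\flip$ is an involution, all the $f_x$ end up agreeing up to a single $\flip$. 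Applying the sign-flip trick of Corollary~\ref{cor:nosigns} normalizes $f$ to a single non-constant $\fr$.

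The second step is to constrain $\fr$. Without symbol-completeness one cannot invoke Theorem~\ref{thm:main} directly, but consistency of the aggregator on every consistent profile — tested against each minimal inconsistent set in $\agenda$ — can be used to show $\fr$ is forceful at every relevant index: if some index $i$ were free, a minimal inconsistent set together with appropriately chosen witness judgments would allow one to toggle judge $i$'s vote on one proposition while leaving the aggregate unchanged everywhere, producing an inconsistent output. By Lemma~\ref{lem:forceful}, $\fr$ must then be one of AND, OR, XOR, or NXOR.

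The main obstacle is ruling out XOR and NXOR, and this is where even-number negatability does its real work. Given a minimal inconsistent $Y$ with even-size $Z\subset Y$ such that $C:=(Y\setminus Z)\cup\negation Z$ is consistent, I would construct two consistent judgments $J_1,J_2$ that agree on $Y\setminus Z$, disagree on $Z$, and whose XOR restricted to $Y$ is the forbidden all-$T$ pattern; the evenness of $|Z|$ is exactly what makes the pointwise XOR arithmetic land on all-$T$ on $Z$ while agreement on $Y\setminus Z$ forces all-$T$ there as well. For $n>2$ judges one absorbs the extras by giving them a common third judgment, exploiting that XOR depends only on parity, then re-tunes $J_1,J_2$ to again produce all-$T$ on $Y$ and contradict consistency. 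The same argument handles NXOR by inverting $J_1$. The careful bookkeeping of this parity argument, across both XOR/NXOR and both parities of $n$, is the step I expect to be genuinely delicate; everything before it is routine once the generalized flip lemma is in hand.
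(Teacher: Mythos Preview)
The paper does not contain a proof of this theorem. Theorem~\ref{thm:list-arrow-general} is quoted from \cite{list2013} purely for comparison with the paper's own Corollary~\ref{cor:main}; it is stated, not proved, and the surrounding discussion explicitly says the author ``were unable to immediately derive our Corollary~\ref{cor:main} from Theorem~\ref{thm:list-arrow-general}'' and treats the two results as overlapping but logically independent. So there is no in-paper proof to compare your proposal against.

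That said, your sketch has a substantive internal gap worth flagging. In Step~2 you assert that forcefulness of $\fr$ plus Lemma~\ref{lem:forceful} yields ``AND, OR, XOR, or NXOR.'' This is confused on two counts. First, XOR and NXOR are \emph{not} forceful (no single coordinate determines the output), so they cannot appear as a consequence of forcefulness; if your Step~2 really established forcefulness, Step~3 would be unnecessary. Second, Lemma~\ref{lem:forceful} does not cut the forceful functions down to AND and OR: it gives a $2^{n+1}$-parameter family indexed by $c_0,c_1,\ldots,c_n\in\{-1,1\}$. In the paper that family is whittled to AND/OR only by the Fourier computation of Proposition~\ref{prop:AND-OR}, which crucially uses the normal-pair commutation with a specific $\fd$. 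In the general semi-blocked setting you have no such $\fd$, so you have no analogue of that step; you would need a separate argument (e.g.\ using minimal inconsistent sets of size $\geq 3$ and UP) to force all the $c_i$ to share a sign. Your Step~2 ``free index'' argument is also only a gesture: without symbol-completeness, judgments are constrained to $\logical_{\halfagenda}$ and you cannot toggle a single coordinate of a single judge freely, which is exactly the move the paper's Proposition~\ref{prop:XOR-or-free} relies on.
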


Theorem~\ref{thm:list-arrow-general} seems more powerful than Theorem~\ref{thm:list-arrow}. The differences between Theorem~\ref{thm:list-arrow-general} and our result Corollary~\ref{cor:main} are:
\begin{itemize}
\item We can check that our symbol-closed condition implies that $\closure(\fd)$ is semi-blocked, so Theorem~\ref{thm:list-arrow-general} applies to a wider framework than Corollary~\ref{cor:main}. 
\item Corollary~\ref{cor:main} catches the XOR case, which is not even-number negatable.
\item Corollary~\ref{cor:main} obtains additional constraints on the logical structure of the propositions; i.e. the propositions themselves must look like OR, AND, and XOR, in addition to the aggregation functions taking that form.  
\item Corollary~\ref{cor:main} notes an enforced ``duality'' between the logical structures of the propositions and the aggregation functions.
\end{itemize}

We were unable to immediately derive our Corollary~\ref{cor:main} from Theorem~\ref{thm:list-arrow-general}. However, Theorem~\ref{thm:list-arrow-general} captures a signficant portion of the power of our result. Thus, an appropriate role of Corollary~\ref{cor:main} is as a nontrivial refinement of a special family (the symbol-closed compound propositions) of Theorem~\ref{thm:list-arrow-general}, giving more explicit information to what the propositions and aggregation functions must be in that family.

\subsection{Anonymity}

We add Anonymity to Corollary~\ref{cor:main} to get a more restrictive theorem and then compare it against literature.

\begin{thm}
\label{thm:our-list-pettit}
Suppose $\halfagenda$ is symbol-complete and symbol-connected. Then if $f$ is a JAR satisfying Anonymity and PI in normal form, $f$ must be the OR, AND, or XOR (or negation) function over all $n$ judges, and each compound proposition $\fd$ must be the same function (or negation) over a subset of the symbols.

\end{thm}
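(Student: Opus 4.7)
The plan is to combine Corollary~\ref{cor:main} with the Anonymity hypothesis, which is a substantial strengthening of UP.

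Since $f$ is in normal form for $\halfagenda$, the PI condition collapses to a single unified function $\fr\colon \{T,F\}^n \to \{T,F\}$, and because the normal form setup of Corollary~\ref{cor:strong} presupposes the Arrovian setting, $f$ is also UP. Thus Corollary~\ref{cor:main} applies and yields three cases: either $|\halfagenda| = 1$ (no compound propositions present to constrain $\fr$), $f$ is a dictatorship on a single judge $i$, or $|\halfagenda| > 1$ and $f$ is an oligarchy on some subset $S \subseteq [n]$ with $|S| \geq 2$ where $\fr$ and every compound $\fd \in \halfagenda$ are simultaneously the OR, AND, or XOR function (with negation allowed in the XOR case).

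Anonymity then prunes these cases sharply. A dictator function $\fr(p_1, \ldots, p_n) = p_i$ is not symmetric in its arguments unless $n = 1$, ruling out the dictatorship case for $n > 1$. An oligarchy that ignores judges outside $S$ is symmetric if and only if $S = [n]$, so Anonymity forces $S = [n]$, and hence $\fr$ is the OR, AND, or XOR (or negation) function over all $n$ judges. The same-type correspondence between $\fr$ and each compound proposition carries over directly from the matching clause of Corollary~\ref{cor:main}.

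The main obstacle is the degenerate case $|\halfagenda| = 1$: there, Anonymity and UP together do not force $\fr$ into the stated OR/AND/XOR form, since any symmetric UP function survives. This case must either be tacitly excluded (one assumes $\halfagenda$ contains at least one compound proposition, which is the interesting setting) or the conclusion read as vacuous regarding compound propositions when none exist. No genuinely new analytic work is needed beyond this bookkeeping, since all of the hard Fourier content has already been absorbed into Theorem~\ref{thm:main} and hence Corollary~\ref{cor:main}.
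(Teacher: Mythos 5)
Your overall route --- invoke Corollary~\ref{cor:main} and let Anonymity kill the dictatorship case and force $S=[n]$ --- is exactly the paper's. The one genuine gap is in how you obtain the hypotheses of Corollary~\ref{cor:main}: that corollary requires $f$ to be Arrovian, i.e.\ UP as well as PI, whereas Theorem~\ref{thm:our-list-pettit} assumes only Anonymity and PI. You assert that UP comes for free ``because the normal form setup of Corollary~\ref{cor:strong} presupposes the Arrovian setting,'' but being \emph{in normal form} only means the various $f_x$ coincide with a single unified $\fr$; it does not by itself give $\fr(T,\ldots,T)=T$ and $\fr(F,\ldots,F)=F$. The paper instead begins by negating $f$ so that it becomes UP, applies Corollary~\ref{cor:main} to the negated rule, and undoes the negation at the end --- which is precisely why the theorem's conclusion reads ``OR, AND, or XOR \emph{(or negation)}'': the negated variants such as NAND and NOR are not UP and could never appear under your reading, so your argument both rests on an unsupported deduction of UP and fails to account for the negated cases that the statement explicitly allows. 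You need either to add the reduction-by-negation step or to prove that Anonymity plus PI already forces UP; neither is done.

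The rest of your argument matches the paper's: Anonymity forbids irrelevant judges, hence rules out dictatorship for $n>1$ and forces $S=[n]$, and the same-function-type clause transfers directly from Corollary~\ref{cor:main}. Your remark about the degenerate case $|\halfagenda|=1$ is a fair catch --- the paper's proof silently assumes a compound proposition exists --- but it is bookkeeping rather than a flaw in your approach.
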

\begin{proof}
First, we may negate $f$ if necessary to ensure $f$ is UP. As $f$ is PI, $f$ is Arrovian. Thus, Corollary~\ref{cor:main} applies. Take a compound statement $\fd$ and its symbols $a_1, \ldots, a_m$. Because of Anonymity, we cannot have any irrelevant judges (since Anonymity means $f$ is symmetric on its arguments). Thus, the only possible JAR's are the OR, AND, and XOR functions (or their negations, as we may have negated $f$ at the beginning of our proof to make $f$ UP) where the $S$ in Corollary~\ref{cor:main} must be the entire set of judges by Anonymity. 
\end{proof}

The most relevant theorem from literature is (paraphrased):
\begin{thm}[Theorem 3, \cite{list2013}]
\label{thm:list-anonymity} Let $n$ be even. If the agenda is blocked, there exists no JAR $f$ such that Anonymity and PI are both satisfied.
\end{thm}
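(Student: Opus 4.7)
The plan is to derive as much of Theorem~\ref{thm:list-anonymity} as possible from Theorem~\ref{thm:our-list-pettit}, using the parity of $n$ to handle the XOR case cleanly and invoking the strength of the blocked hypothesis for the forceful cases. The first step will be a bridging lemma: argue that a blocked agenda $\agenda$ contains a symbol-closed compound proposition $\fd$ such that $\closure(\fd)$ is symbol-connected (and, being symbol-closed, automatically symbol-complete). This bridge is the least straightforward part of the plan, because ``blocked'' is a purely logical, propagation-based condition whereas our conditions are symbolic, and the two hypotheses cover overlapping but distinct territory.

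Granted the bridge, I would restrict attention to $\closure(\fd)$. By Theorem~\ref{thm:our-list-pettit}, any JAR satisfying Anonymity and PI must act on $\closure(\fd)$ as one of OR, AND, XOR, or a negation thereof, applied over all $n$ judges, with $\fd$ itself matching the same type. I would rule out the XOR/NXOR case first, since parity makes it transparent: for $n$ even, the formula $\fr(s_1,\ldots,s_n) = (-1)^n s_1 \cdots s_n$ and its negation both send the all-$T$ and the all-$F$ unanimous inputs to the \emph{same} output. A JAR must map fully rational profiles to fully rational profiles, but a profile in which every judge holds the unique all-$F$ judgment on the symbols of $\fd$ (guaranteed to exist in a blocked agenda, since blockedness forbids the trivial situations) would have to aggregate to that same judgment; this contradicts the constant behavior, so XOR and NXOR are both eliminated.

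The OR and AND cases are more delicate. Here I would combine Theorem~\ref{thm:our-list-pettit}'s insistence that $\fd$ be the \emph{same} operation as $\fr$ with the fact that a blocked agenda, by $\vdash\vdash^*$-connectivity, should force the presence of a second compound proposition whose logical structure differs from $\fd$'s. If both an OR-type and an AND-type symbol-closed compound are present in the blocked agenda, then Theorem~\ref{thm:our-list-pettit} forces $\fr$ to be simultaneously of both types, a contradiction. As a backup plan in case that structural claim fails, I would adapt a doctrinal-paradox construction: produce a profile of fully rational judgments whose OR-aggregate (respectively AND-aggregate) over all $n$ judges violates some forced implication coming from the blocked structure, thereby ruling out those cases as well.

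The hardest step, I expect, will be the bridging lemma. The paper has already observed that symbol-closedness implies semi-blockedness but not the full blocked condition, so the implication needed here is in the strictly harder direction; it will likely need either a reduction to a substructure where both hypotheses hold, or a separate argument tailored to the blocked definition from \cite{list2013}. A secondary subtlety is confirming that the parity condition ``$n$ even'' is genuinely needed to rule out OR and AND --- parity entered cleanly only for XOR above --- which suggests the OR/AND case rests more on agenda structure than on parity, matching the flavor of the classical result in \cite{list2013}.
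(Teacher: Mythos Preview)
The fundamental issue is that this theorem is not proved in the paper at all: it is \emph{cited} from \cite{list2013} (note the attribution ``Theorem~3, \cite{list2013}'') and stated only for purposes of comparison with the paper's own Theorem~\ref{thm:our-list-pettit}. The surrounding text is explicit: ``We omit the discussion of \emph{blocked} here. The relevant point is that some of the $\halfagenda$ we allow are blocked and some are not. Thus, Theorem~\ref{thm:our-list-pettit} complements Theorem~\ref{thm:list-anonymity} in the literature, with neither being immediate consequences of the other.'' So there is no ``paper's own proof'' to compare against, and you are attempting something the authors themselves disclaim.

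Your bridging lemma is precisely the step the paper says does \emph{not} go through. You need that every blocked agenda contains a symbol-closed compound proposition, but ``blocked'' in \cite{list2013} is a condition on the abstract entailment structure of $\agenda$, not on any underlying symbol set; there is no reason a blocked agenda must even be presented with a symbol set in which some proposition's symbols all lie in $\agenda$. The paper's observation that symbol-closedness implies \emph{semi}-blockedness goes in the wrong direction for you, and the remark that ``some of the $\halfagenda$ we allow are blocked and some are not'' confirms that the two hypotheses carve out genuinely different regions. Your own hedging (``least straightforward part'', ``strictly harder direction'', ``backup plan'') already reflects that you are trying to force a derivation the paper explicitly says is not available. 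The OR/AND case is similarly speculative: you hope blockedness forces two compound propositions of differing logical type, but you give no argument for this, and parity of $n$ plays no role there even though the theorem requires it.
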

We omit the discussion of \emph{blocked} here. The relevant point is that some of the $\halfagenda$ we allow are blocked and some are not. Thus, Theorem~\ref{thm:our-list-pettit} complements Theorem~\ref{thm:list-anonymity} in the literature, with neither being immediate consequences of the other.

\subsection{Systematicity}

Systematicity is a much stronger condition than PI:
\begin{thm}
\label{thm:our-list-pettit-2}
If $\halfagenda$ contains at least one symbol-closed compound statement $\fd$ and there are $n \geq 2$ judges, then there does not exist a JAR $f$ such that Anonymity and Systematicity are both satisfied.
\end{thm}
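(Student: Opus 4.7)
My plan is to combine Corollary~\ref{cor:main} with the extra rigidity that Systematicity imposes beyond PI. Systematicity provides a single Boolean function $f'$ with $f_x = f'$ for every proposition $x \in \agenda$, which yields two key facts: the restriction of $f$ to $\closure(\fd)$ is already in normal form with unified function $\fr = f'$, and applying Systematicity to both $x$ and $\negation{x}$ together with $p_i(\negation{x}) = \negation{p_i(x)}$ forces the identity $\flip(f') = f'$.

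From the self-flip identity, $f'(T, \ldots, T) = \negation{f'(F, \ldots, F)}$, so one of $f, \negation{f}$ is UP; after possibly replacing $f$ with $\negation{f}$ (which preserves Anonymity and Systematicity), I may assume $f$ is UP. Then $f$ is Arrovian, so Corollary~\ref{cor:main} applies to $f$ restricted to the symbol-complete and symbol-connected set $\closure(\fd)$. Anonymity rules out dictatorship (which is not symmetric for $n \geq 2$) and forces the distinguished subset $S$ to be all of $[n]$. Thus $f'$ must lie in the set $\{\text{OR}_n, \text{AND}_n, \text{XOR}_n, \text{NXOR}_n\}$, with $\fd$ of the corresponding type on its symbols.

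I would then cross-check each candidate against $\flip(f') = f'$. Direct computation gives $\flip(\text{AND}_n) = \text{OR}_n$ and $\flip(\text{OR}_n) = \text{AND}_n$, so neither AND nor OR is self-flip, eliminating both. The main obstacle is the (N)XOR case, where $\flip(\text{XOR}_n) = \text{XOR}_n$ holds precisely when $n$ is odd. For $n$ even this immediately closes the argument; the $n$ odd subcase is the delicate one, and resolving it would require an additional observation, for instance a parity argument on the arity $m$ of $\fd$ combined with the matching (N)XOR structure forced by Theorem~\ref{thm:main}, so that the coupled parity constraints at the proposition level ($\fd$ on $m$ symbols) and at the aggregation level ($f'$ on $n$ judges) cannot be simultaneously satisfied alongside the remaining consistency requirements imposed by $\halfagenda$.
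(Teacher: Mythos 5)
Your route is the same one the paper takes: use Lemma~\ref{lem:PI-doctrinal-strong} (applied to the pair $x,\negation{x}$) to upgrade Systematicity to the self-flip identity $\flip(f')=f'$, use Anonymity together with Corollary~\ref{cor:main} (equivalently Theorem~\ref{thm:our-list-pettit}) to narrow $f'$ to OR, AND, XOR or their negations over all $n$ judges, and then test each candidate against the self-flip identity. Your eliminations of AND, OR and their negations are correct, and the point at which you stop --- $\flip(\mathrm{XOR}_n)=\mathrm{XOR}_n$ precisely when $n$ is odd --- is exactly the case that the paper's proof dismisses with ``it is easy to check that none of them work for $n\geq 2$.'' You are right to be suspicious of that step: the assertion fails for odd $n$.

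However, the patch you sketch (a coupled parity constraint on $m$ and $n$) cannot succeed, because the odd-$n$ XOR case is a genuine counterexample to the theorem as stated. Take $\halfagenda=\{P,Q,R,\fd\}$ with $\fd$ the XOR of $P,Q,R$, let $n=3$, and let $f'$ be the three-judge XOR. Since XOR is associative and commutative, $\bigoplus_j\bigl(\bigoplus_i M_{ij}\bigr)=\bigoplus_i\bigl(\bigoplus_j M_{ij}\bigr)$, so the aggregated value of $\fd$ is the XOR of the aggregated symbol values and the output judgment is fully rational; $f'$ is symmetric, so Anonymity holds; and because negating all three inputs of a three-argument XOR negates its output, the single function $f'$ also computes $f(p_1,\ldots,p_n)(\negation{x})$ from the values $p_i(\negation{x})$, so Systematicity holds with this one $f'$. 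The same construction works for any odd $n\geq 3$, any $m\geq 2$, and any XOR-type symbol-closed compound proposition (here both parities of $m$ are fine, which is why no parity argument on $m$ can rescue the claim). So the honest conclusion of your argument is not that a further lemma is needed but that the statement requires an additional hypothesis --- for instance that $n$ is even, or that $\halfagenda$ contains a symbol-closed compound proposition that is not of XOR/NXOR type. Your write-up, which isolates precisely this case as unresolved, is more accurate than the paper's own proof.
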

\begin{proof}
Note that if we assume Systematicity (instead of PI), we would also need the aggregation functions $f_{\negation{x}} = \flip(f_x)$ to equal $f_x$, by Lemma\ref{lem:PI-doctrinal-strong}. By Theorem~\ref{thm:our-list-pettit}, if suffices to check the OR, AND, and XOR functions (or their negations), and it is easy to check that none of them work for $n \geq 2$.
\end{proof}

This result does strengthen List-Pettit's original result: 
\begin{thm}[Theorem 2, \cite{list2002aggregating}]
\label{thm:list-pettit}
If $\agenda$ contains $\{P, Q, P \wedge Q, \negation{(P \wedge Q)}\}$ and $n \geq 2$, then there does not exist a JAR $f$ such that Anonymity and Systematicity are both satisfied. 
\end{thm}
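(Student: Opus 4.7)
The plan is to reduce the problem to a setting where Theorem~\ref{thm:our-list-pettit} classifies $f$, then exploit the extra strength of Systematicity over PI to rule out every surviving candidate.

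First I would restrict attention to $\halfagenda'=\closure(\fd)$, which is symbol-complete and symbol-connected (the compound $\fd$ links all its atomic symbols). The restriction of $f$ to $\halfagenda'$ remains Anonymous and Systematic, and Systematicity immediately implies PI (the single function promised by Systematicity serves as each $f_x$). After negating $f$ if necessary to make it UP, Theorem~\ref{thm:our-list-pettit} applies and forces $\fr$ to be one of OR, AND, or XOR (or a negation) over all $n$ judges, with $\fd$ itself of the same logical form on its symbols.

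Next I would exploit the gap between Systematicity and mere PI. Systematicity demands a single $f'$ valid for every proposition in $\agenda$, including each negation $\negation{x}$. Since $p_i(\negation{x})=\negation{p_i(x)}$ and consistency of the aggregated judgment forces $f(p)(\negation{x})=\negation{f(p)(x)}$, Systematicity yields $f'(\negation{s_1},\ldots,\negation{s_n})=\negation{f'(s_1,\ldots,s_n)}$, i.e., $f'=\flip(f')$. This is also a direct consequence of Lemma~\ref{lem:PI-doctrinal-strong}, which gives $f_{\negation{x}}=\flip(f_x)$; under Systematicity this must coincide with $f_x=f'$.

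The final step is a direct case check against the short list given by Theorem~\ref{thm:our-list-pettit}. For OR and AND, flipping all inputs and then negating turns each function into the other (and similarly for NOR and NAND), so none of these four can equal its own flip when $n\geq 2$. The main obstacle is the XOR/NXOR case, where one must carefully unwind the paper's Fourier formulas $\fr(s)=\pm s_1\cdots s_n$, compute $\flip(\fr)$ from the definition, and combine the resulting sign constraint with the normal-pair compatibility between $\fd$ and $\fr$ forced by Theorem~\ref{thm:our-list-pettit} in order to exclude $\flip(\fr)=\fr$ for $n\geq 2$. Exhausting all six possibilities produces the desired contradiction, so no JAR can be simultaneously Anonymous and Systematic whenever $\halfagenda$ contains a symbol-closed compound.
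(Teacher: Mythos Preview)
Your approach is essentially the paper's: the paper does not prove Theorem~\ref{thm:list-pettit} directly (it is quoted from \cite{list2002aggregating}) but derives it as the special case $\fd=P\wedge Q$ of Theorem~\ref{thm:our-list-pettit-2}, whose two-line proof is exactly your outline---apply Theorem~\ref{thm:our-list-pettit}, then use Systematicity to force $f_x=\flip(f_x)$ and check the short list.

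One point deserves correction. You call the XOR/NXOR case ``the main obstacle'' and propose to ``exhaust all six possibilities,'' but for \emph{this} theorem there is only one case. The compound in question is $\fd=P\wedge Q$, which is AND; the ``same function type'' clause of Theorem~\ref{thm:our-list-pettit} therefore forces $\fr$ to be AND on all $n$ judges, and the XOR/NXOR/OR branches never arise. The entire case check collapses to the single observation $\flip(\mathrm{AND})=\mathrm{OR}\neq\mathrm{AND}$ for $n\geq 2$. Your plan to eliminate XOR by ``combining the sign constraint with normal-pair compatibility'' is really just this type-matching, applied up front rather than inside a case. Note also that your hoped-for conclusion ``$\flip(\fr)\neq\fr$'' is actually \emph{false} for XOR when $n$ is odd (in $\{-1,1\}$ notation, $\flip\big((-1)^n s_1\cdots s_n\big)=-s_1\cdots s_n$, which equals the original iff $n$ is odd), so the XOR case cannot be dispatched by a flip computation alone---it is the type-matching with the specific $\fd=P\wedge Q$ that does the work here.
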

However, the work from \cite{list2007} and \cite{list2013} basically subsume both of these results. Thus, our Theorem~\ref{thm:our-list-pettit-2} is not a significant contribution to the literature. We present it mostly as a natural followup to the other results in this section.

\section{Conclusion and Future Direction}
\label{sec:conclusion}

We have defined and classified ``normal pairs,'' inspired by Arrow's Theorem. We then obtained some results in the judgment aggregation framework using this concept. We were motivated by the ideal of cooperation and collaboration between different theoretical fields\footnote{Our other recent work in progress in social choice theory with Marengo and Settepanella \cite{marengo-sette-zhang} is similarly motivated, trying to connect the judgment aggregation and classical social choice theory frameworks.}.

It is cute that we see our familiar functions OR, AND, and XOR. Furthermore, we knew from human history that the ``at least one yes means yes'' (OR) and ``at least one no means no'' (AND) rules are reasonable ways\footnote{We are less excited about discovering XOR as a practical tool for judgment aggregation.} for people to aggregate opinions, so it is good that we are not just stuck with dictatorships as in Arrow's Theorem. 

Our work complements the known results in the judgment aggregation literature for the special case of symbol-complete and symbol-connected agendas. Because Arrovian JAR's stay Arrovian on subsets of the propositions, this also provides information for agendas containing a subset logically equivalent to a symbol-complete and symbol-connected agenda. We also show that there is a nice ``duality'' between the roles of the logical propositions and aggregation functions, in that they must use basically the same type of function, but on different numbers of arguments.

Rather than the results themselves, we think the main value of this paper is introducing Fourier-theoretic tools to the standard problems of judgment aggregation. We use these tools to prove Propositions~\ref{prop:AND-OR} and \ref{prop:XOR}. Our proofs are quite short and self-contained, showcasing the power of Fourier analysis. We emphasized ``standard'' because Nehema \cite{nehama2011} has done work on ``approximate judgment aggregation'' where the logical consistency rules are probabilistically correct, also using Fourier analysis. Our works have many similarities (a particular agenda Nehema examines is the \emph{conjunction agenda}, which is the symbol-closed closure of an AND function). Another way of introducing probability into the problem is ``probabilistic judgment aggregation,'' where the judgments themselves contain probabilistic data representing confidence levels, such as in \cite{dietrich2017probabilistic1} and \cite{dietrich2017probabilistic2}. As Fourier analysis can be used on many domains, we suspect Fourier-analytical techniques to be potentially useful there as well.

Compared to the existing literature (which think of logical propositions as abstract set elements, connected by concepts of consistency and inconsistency), our modelling of propositions as elements in a boolean algebra may seem unnecessary. However, the algebraic structure was essential to us realizing the duality between logical propositions and aggregation functions, so we propose it as a useful mental model to have in this field. We are also not losing generality; for example, we can embed the voter preferences in the social choice framework for Arrow's Theorem as logical propositions: $\log_2(m!)$ symbols are enough to embed the strict preference orderings on $m$ candidates (e.g. in some a-priori arbitrary order, ($TTT, TTF, \ldots$) can encode $(a > b > c, a > c > b, \ldots)$ respectively), and then preferences of the form $(x \geq y)$ can be written as binary functions of the preferences.

Our symbol-complete assumption is natural, especially for the ``practical'' law-oriented examples brought up by List-Pettit's original work \cite{list2002aggregating}. However, it is still limiting, and an obvious direction for fruitful future work would be to emulate \cite{list2007} and \cite{list2013}, whose framework extends far beyond symbol-complete assumptions. For example, our above embedding of the preference orderings does not contain a symbol-closed proposition, because the judgments are only done on the pairwise preferences and not the symbols themselves. This means that we do not have an immediate generalization of Arrow's Theorem.

More complex functions would appear if we allow agendas which are not symbol-complete. As an example, let $\halfagenda = \{(X \vee Y), (\negation{X} \vee Y)\}$. Note that the fully rational assignments of values to $\halfagenda$ are exactly those where at least one $T$ is assigned. Then, consider $f$ that assigns to each $x \in \halfagenda$ the ``majority rule'' of a subset of the judges. Because each judge sees at least one $T$, the overall number of $T$'s seen by the relevant judges is at least $50\%$, so at least one aggregated value must be $T$, meaning the result is also consistent. Thus, we must be ready for stranger functions; \cite{list2013} finds some of these functions as well.

We hope that the tools we have created, such as definition of \emph{normal pairs} and the Fourier analysis in Lemma~\ref{lem:powerful-sauce} and Corollary~\ref{cor:powerful}, would be useful for future work. A long-term goal would be an explicit classification of all possible PI aggregation functions for any agenda of logical propositions. 

\section*{Acknowledgments}
We thank Boris Alexeev, Joshua Batson, Paul Christiano, David Dynerman, Steven Karp, and Lauren Williams for helpful conversation. We especially thank Christian List for taking his time to map out the relevant literature for us, and Simona Satepanella for introducing the judgment aggregation framework to us and collaborating on related problems (this work would not have been possible otherwise).

\bibliographystyle{abbrv}
\bibliography{sct}

\end{document}